\theoremstyle{plain}
\newtheorem{thm}{Theorem}
\newtheorem{lemma}[thm]{Lemma}
\newtheorem{claim}[thm]{Claim}
\theoremstyle{definition}
\newtheorem{defn}[thm]{Definition}
\newtheorem{conject}{Conjecture}
\theoremstyle{remark}
\newtheorem{remark}[thm]{Remark}
\DeclareMathOperator{\conv}{conv}
\DeclareMathOperator{\iconv}{iconv}
\begin{document}
 
\definecolor{mydarkgreen}{RGB}{0,100,0}
\tikzstyle{dashed} = [line width=0.4pt, dash pattern=on 3.0pt off 3.0pt]
\tikzstyle{bigred} = [mark=*,mark size=2.5pt,
  mark options={color=red}, color=red]
\tikzstyle{bigblue} = [mark=square*,mark size=2.5pt,
  mark options={color=blue}, color=blue]
\tikzstyle{bigblack} = [mark=x,mark size=3.5pt,
  mark options={color=black, line width=1pt}, color=black]
\tikzstyle{biggreen} = [mark=triangle*,mark size=4pt,
  mark options={color=mydarkgreen}, color=mydarkgreen]

\tikzstyle{sred} = [mark=*,mark size=2pt,mark options={color=red}, color=red]
\tikzstyle{sblue} = [mark=square*,mark size=2pt,
  mark options={color=blue}, color=blue]
\tikzstyle{sblack} = [mark=x,mark size=2pt,
  mark options={color=black, line width=1pt}, color=black]
\tikzstyle{sgreen} = [mark=triangle*,mark size=2.3pt,
  mark options={color=mydarkgreen}, color=mydarkgreen]
\tikzstyle{smarko} = [mark=o,mark size=2.5pt,
  mark options={color=black}, color=black]

\tikzstyle{nblue} = [rectangle, fill=blue, minimum size=5pt]
\tikzstyle{nred} = [circle, fill=red, minimum size=5pt]
\tikzstyle{ngreen} = [shape=regular polygon, regular polygon sides =3, 
  fill=mydarkgreen, minimum size=8pt]
\tikzstyle{nblack} = [cross out, line width=1.3pt,  
  draw=black, minimum size=3pt]
\tikzstyle{nempty} = [circle, line width=0.8pt,  
  draw=black, minimum size=6pt]
\tikzstyle{extended line}=[shorten >=-#1,shorten <=-#1]

\begin{center}
\textbf{\begin{LARGE}
5-colorable visibility graphs have bounded size or 4 collinear points
\end{LARGE}}\\
\textsc{Bálint Hujter, Sándor Kisfaludi-Bak}

\end{center}
\begin{abstract}
We investigate the question of finding a bound for the size of a
$\chi$-colorable finite visibility graph that have at most $\ell$ collinear
points.  This can be regarded as a relaxed version of the Big Line - Big Clique
\cite{KaraPorWood2005} conjecture.  We prove that any finite point set that has
at least 2311 points has either 4 collinear points or a visibility graph that
cannot be 5-colored. 
\end{abstract}

\section{Introduction}
Let $X$ be a finite set of points in the Euclidean plane. For a pair of points
$u,v\in X$ the open line segment with endpoints $u$ and $v$ will be denoted by
$(uv)$. The \textit{visibility graph} $G_X$ is a simple graph with vertex set
$X$, where the pair $u,v\in X$ is connected  if and only if $(uv)$ does not
contain any point from $X$. 

The starting point of our investigation is the following conjecture.

\begin{conject}[Big line, big clique \cite{KaraPorWood2005}]\label{blbc}
For any fixed $\ell$ and $k$ there is a constant $c=c(k,l)$ such that every
finite planar point set which has size at least $c$ has either $\ell$ collinear
points, or its visibility graph has a clique of size $k$. 
\end{conject}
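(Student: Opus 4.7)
My proposal is necessarily a strategic sketch: the Big Line, Big Clique conjecture is a well-known open problem, so I will outline an induction-based attack on arbitrary $k$ and $\ell$ and pinpoint where the true difficulty lies.

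The plan is to induct on $k$, with $\ell$ held fixed. The base $k=2$ is immediate, since for $|X|\ge 2$ any two points realising the minimum pairwise distance in $X$ are adjacent in $G_X$. For $k=3$, I would invoke an Erd\H{o}s--Szekeres-type statement: a sufficiently large planar point set with no $\ell$ collinear points contains an arbitrarily large convex-position subset $Y$. Three consecutive vertices of $\conv(Y)$ form a $K_3$ in $G_Y$, and to lift this to $G_X$ one checks that points of $X\setminus Y$ cannot simultaneously block all candidate triples along $\conv(Y)$, using that at most $\ell-1$ points can lie on any single line.

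For the inductive step I would strengthen the hypothesis to produce not a single $K_{k-1}$ but a large, structured family of them---for instance, many vertex-disjoint copies, or copies all meeting a prescribed small region. Given such a family in $G_X$, the aim is to exhibit a point $p\in X$ that sees every vertex of at least one copy, thereby extending it to $K_k$. Here I would combine a counting argument over candidate extensions with a Szemer\'edi--Trotter style incidence bound, hoping to cap the total amount of blocking and force some copy to be extendable by $p$; alternatively, one could iterate the hypothesis recursively, at each stage passing to a subset on which a partial $K_{k-1}$ is already pinned down.

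The main obstacle is exactly this extension step. Blocking behaves adversarially: a single point lying on the line from $p$ through a potential clique-vertex spoils all candidate extensions containing that vertex, so few blockers can destroy very many candidates at once. The hypothesis that at most $\ell-1$ points are collinear limits this only weakly, since every line is allowed to carry $\ell-1$ blockers, and so a naive averaging argument will not suffice. A successful proof almost certainly requires new structural input about visibility graphs---perhaps a geometric regularity lemma in the semi-algebraic setting, or a recursion that bypasses the direct extension altogether. This is precisely why the conjecture remains open for all but the smallest values of $k$, and why the present paper settles only a chromatic-number relaxation in a restricted regime.
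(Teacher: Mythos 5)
This statement is an open conjecture; the paper does not prove it (it explicitly notes the conjecture is open for all $k\ge 6$ and $\ell\ge 4$, and only establishes a chromatic-number relaxation for $k=5$, $\ell=3$). Your submission is, by your own admission, a strategy sketch rather than a proof, so there is a genuine gap --- in fact the gap \emph{is} the conjecture. Concretely: the inductive step you propose rests on two unproven ingredients. First, the strengthened hypothesis (a large structured family of vertex-disjoint or co-located copies of $K_{k-1}$) does not follow from the induction hypothesis, which only yields a single clique; producing many such copies is itself an open strengthening. Second, the extension step cannot be closed by the counting-plus-incidence argument you gesture at: a blocker is not an incidence in the Szemer\'edi--Trotter sense, since a single point of $X$ lies on $\Theta(|X|)$ lines through $p$ and can therefore ruin a constant fraction of all candidate extensions at once, and the hypothesis of at most $\ell-1$ collinear points gives no per-line savings beyond a constant factor. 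You correctly identify this as the obstacle, but identifying an obstacle is not overcoming it.

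Two smaller points. Your $k=3$ route via Erd\H{o}s--Szekeres is more roundabout than necessary (any finite non-collinear set contains an empty triangle, e.g.\ by taking a shortest visible segment and the point subtending the smallest nonzero triangle with it), but more importantly the convex-position strategy is known \emph{not} to scale: the cases $k\le 5$ are handled via empty convex $k{-}1$-gons (and the paper's hexagon theorem is in this spirit for the chromatic relaxation), yet Horton sets show that arbitrarily large sets in general position need contain no empty convex $7$-gon, so any approach that tries to find the clique inside an empty convex polygon is dead at $k=7$. A correct attack must find cliques in non-convex position, which is precisely the structural input that is missing both from your sketch and from the literature.
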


This conjecture is currently open for all $k \ge 6$ and $l \ge 4$. Note that
the finiteness here is necessary, there is a counterexample if we allow
infinite point sets \cite{wood2010big}.

Let $mc_\ell(k)$ be the maximum cardinality of a finite set $X$ that has at most
$\ell$ collinear points and its visibility graph can be colored with at most
$k$ colors. If there is no such maximum, then let $mc_\ell(k)=\infty$. Note
that $mc_\ell(k) \le c(k+1,\ell+1)$, because maximum clique size is at least
the chromatic number. Based on this inequality the following weaker conjecture
can be formulated:

\begin{conject}[\cite{porwood2010}] \label{blbchi}
$mc_\ell (k)<\infty$ for all $k,\ell \ge 2$.
\end{conject}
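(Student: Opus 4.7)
The plan is to prove Conjecture~\ref{blbchi} by double induction on the pair $(\ell, k)$, dispatching easy base cases and then attempting an inductive reduction. For pairs $(\ell,k)$ where the Big Line--Big Clique bound $c(k+1,\ell+1)$ is already known unconditionally, the inequality $mc_\ell(k) \le c(k+1,\ell+1)$ noted in the introduction immediately gives finiteness; the genuine challenge is the remaining range $k \ge 5$, $\ell \ge 3$, which overlaps exactly with the open part of Conjecture~\ref{blbc}.

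For the base cases: when $\ell = 2$ every set is in general position and $G_X \cong K_{|X|}$, so $mc_2(k) = k$; for $k = 1$ trivially $mc_\ell(1) = 1$. The case $k = 2$ of bipartite visibility graphs calls for a direct extremal argument: every same-colour pair is blocked by a third point of $X$, and one can try to bound $|X|$ by counting blocker incidences on the lines through colour classes, making essential use of the hypothesis of at most $\ell$ collinear points.

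For the inductive step, given a $k$-coloured visibility graph on $X$ with colour classes $C_1, \ldots, C_k$ and at most $\ell$ collinear points, I would try to locate a substructure satisfying the inductive hypothesis with strictly smaller parameters. Two natural routes suggest themselves: (i) projectively contract a maximal collinear set of size $\ell$ to a single point, aiming to reduce $\ell$ by one while losing only a controlled number of vertices and without producing too many new visibility edges between points previously separated by the contracted line; or (ii) isolate one colour class $C_i$, whose internal blocker pattern is highly constrained (each pair has a blocker in $X$), and either bound $|C_i|$ directly via a Ramsey-type argument on the blocker lines or extract a smaller-parameter sub-instance from the blocker configuration itself.

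The main obstacle is the inductive step, and it is precisely here that the conjecture remains open in the literature. Deleting or contracting points in a visibility configuration can create new visibility edges, so the inductive hypothesis does not transfer cleanly to the reduced point set; conversely, passing to a single colour class discards the embedding data that makes visibility graphs tractable at all. The bound $mc_3(5) \le 2310$ announced in the abstract handles only the single pair $(\ell, k) = (3, 5)$ by a bespoke geometric argument, and upgrading such case analyses to a uniform reduction valid for every $(\ell, k)$ is the hard problem standing between the present paper and Conjecture~\ref{blbchi} in full generality.
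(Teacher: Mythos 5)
This statement is a \emph{conjecture} (attributed to P\'or and Wood), and the paper does not prove it: the authors only establish the single new case $mc_3(5)\le 2310$ via Theorem~\ref{main}, and the general statement remains open. Your proposal, read as a proof, has a genuine and fatal gap --- namely the entire inductive step --- and to your credit you say so explicitly. Neither reduction you sketch goes through: contracting a maximal collinear set changes the visibility relation uncontrollably (points formerly blocked by the contracted set become mutually visible, so the colouring of the reduced set need no longer be proper, and the number of such new edges is not bounded in terms of $\ell$ and $k$ alone); and restricting to one colour class $C_i$ discards the blockers, which live in $X\setminus C_i$, so $C_i$ on its own is not a smaller instance of the same problem. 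Your base cases are fine ($mc_2(k)=k$ since no point can block any pair without creating three collinear points, and the $k\le 3$ and $(k,\ell)=(4,3)$ cases are in the cited literature), but these were already known and are exactly the entries filled in the paper's table.

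What you have written is an accurate assessment of the state of the problem rather than a proof, and it would be misleading to present it as the latter. The paper's own contribution is orthogonal to a uniform induction: it proves one new entry, $mc_3(5)<\infty$, by a bespoke geometric argument showing that a unicolored empty convex hexagon cannot be $4$-colour-blocked when $\ell=3$, and then invoking the empty-hexagon theorem ($h(6)\le 463$) to find such a hexagon inside the largest colour class. Since $h(s)=\infty$ for $s\ge 7$, even this route does not extend to $k\ge 6$, which illustrates concretely why the uniform reduction you are looking for is the open problem and not a routine induction.
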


The values of $mc_\ell(\le 3)$ have been established in a paper of Kára, Pór
and Wood in \cite{KaraPorWood2005}. The value of $mc_3(4)$ was found later by
Aloupis et al. \cite{aloupis2013blocking}, but proven in a slightly different
framework than ours in. They also showed some lower bounds for $mc_3(k)$.
The best known bounds for $k=4$ and general $\ell$ can be derived from the
theorem of Bar\'at et al. about empty pentagons \cite{barat2012empty}.

We summarized the progress on finding upper bounds for $mc_\ell(k)$ in the
below table. The new bound (our main result) is underlined.

\begin{tabular}{|c||c|c|c|c|c|}
\hline 
$mc_\ell (k)$ & $k=2$ & $k=3$ & $k=4$ & $k=5$ & $k\geq 6$ \\ 
\hline 
\hline
$\ell=2$ & $2$ & $3$ & $4$ & $5$ & $k$\\ 
\hline 
$\ell=3$ & $3$ & $6$ & $12$ &
\underline{$\leq 2310$}
& ? 
\\
\hline 
$\ell=4$ & $4$ & $6$ & 
$\leq 36$
& ? & ? \\
\hline 
$\ell \geq 5$ & $\ell$ & $\ell+2$ &
$\leq 328 \ell^2$
& ? & ? \\
\hline 
\end{tabular}  

\section{Blocking lemmas}

Let $X$ be any point set. We call $X$ \textit{properly colored} if any pair of
distinct points $x,y \in X$ are not visible to each other if they share the
same color.
 
A subset $U$ of $X$ in wich each point has color $c$ is called
\textit{c-empty} if every point in $X \cap \conv(U) \setminus U$ has different
color than $c$. $U$ is \textit{$k$-color-blocked by} a colored set $B \subset
\conv(U) \setminus U$ if $U \cup B$ is a properly colored set and $B$ has at
most $k$ colors.

A set of three non collinear points is called a \textit{triangle}.

\begin{defn}(Equivalence of two colored point sets)
Let $X$ and $Y$ be two arbitrary colored point sets on the plane.
We call $X$ and $Y$ \textit{equivalent} if there exists a bijection $\phi: X
\rightarrow Y$ satisfying the following conditions:
\begin{itemize}
    \item For any $x_0 \in X$ and any finite subset $\{ x_1, x_2, \dots, x_k
      \}$ of $X$:
      \[ x_0 \in \conv(\{x_1, \dots, x_k\}) \Longleftrightarrow \phi(x_0) \in
      \conv(\{\phi(x_1), \dots, \phi(x_k)\}) \]	
	\item For any $x',x'' \in X$, $x'$ and $x''$ have the same color in $X$ if and only 
	if $\phi(x)$ and $\phi(x'')$ have the same color in $Y$.
\end{itemize} 
\end{defn}

\begin{lemma}~\label{triangle-blocking}
A unicolored triangle cannot be $2$-color-blocked. 
\end{lemma}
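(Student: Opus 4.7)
I argue by contradiction. Suppose $U=\{a,b,c\}$ is a unicolored triangle, with all three points assigned (say) color $1$, and suppose $B\subset\conv(U)\setminus U$ is a $2$-color-blocking set using some palette $\{c_1,c_2\}$. First I observe that each open edge $(ab),(bc),(ca)$ must contain at least one point of $B$: otherwise the two endpoints of that edge would be mutually visible and share color $1$, violating properness. Along a single edge, consecutive points of $B$ are automatically visible, so they must receive different colors; in particular the blocker closest to a vertex $v\in U$ on each of $v$'s incident edges is visible to $v$ and therefore is colored in $\{c_1,c_2\}\setminus\{1\}$, forcing that set to be nonempty.

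The main step is to produce three points $p\in B\cap(ab)$, $q\in B\cap(bc)$, $r\in B\cap(ca)$ that are pairwise visible. Given such a triple, $\{p,q,r\}$ is a $3$-clique in the visibility graph whose colors all lie in $\{c_1,c_2\}$, so by pigeonhole two of them share a color while being visible to each other, contradicting properness. To find this triple I plan to look at the family $\mathcal{T}$ of all triangles whose vertices lie one on each open edge of $U$ with each vertex in $B$, and to choose $T=pqr\in\mathcal{T}$ minimizing the area. I expect to prove that $T$ contains no other point of $X$ in its closure, so that the three sides $(pq),(qr),(rp)$ are free of blockers and the vertices are mutually visible. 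Once the triple is in hand, the $3$-clique argument above finishes the proof in the case $1\notin\{c_1,c_2\}$; the remaining case, where color $1$ appears in $B$, reduces to it by noting that any color-$1$ point of $B$ together with the three vertices of $U$ would have to be pairwise non-visible, which (combined with the alternation constraint on each edge) again forces a same-color visible pair on some cross-edge segment.

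\textbf{Main obstacle.} The emptiness of the minimizer $T$ is where the technical difficulty lies. If some $x\in X$ sits in the closed triangle $T$ but is not a vertex, then $x\in B$; however $x$ need not lie on any edge of $U$ and hence cannot simply be substituted for one of $p,q,r$ in $\mathcal{T}$. One must instead use the presence of $x$ to manufacture a strictly smaller element of $\mathcal{T}$, for example by sweeping the line through $x$ and a well-chosen vertex of $U$ until it hits a $B$-point on the opposite edge, and replacing the corresponding vertex of $T$ by that point. Verifying that this replacement really decreases the area, and dealing with the degenerate configurations where $x$ lies on an edge of $T$ or where two of $p,q,r$ accumulate near one vertex of $U$, is the step I expect to require the most care.
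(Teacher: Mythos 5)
There is a genuine gap, and it sits exactly where you placed your ``main obstacle'': the claim that the area-minimal triangle $T=pqr\in\mathcal{T}$ is empty of other points of $X$. This is not merely delicate — in the purely geometric form you intend to prove it (manufacturing a strictly smaller element of $\mathcal{T}$ from a point $x$ inside $T$), it is false. Take exactly one blocker $p_0,q_0,r_0$ on the three open edges of $U$, and put three further points in the interiors of the segments $p_0q_0$, $q_0r_0$, $r_0p_0$. Then $\mathcal{T}$ consists of the single triangle $p_0q_0r_0$, it is not empty, and no smaller element of $\mathcal{T}$ exists to swap to, because the interior points do not lie on the edges of $U$; in particular no pairwise visible triple with one vertex per edge exists at all. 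So the existence of your $3$-clique cannot be deduced from area minimization or any other color-blind argument: it must use the fact that $B$ carries only two colors (consistently with the lemma, the configuration above cannot in fact be properly $2$-colored, but your plan never exploits this). In addition, your reduction of the case where the triangle's own color occurs in the palette of $B$ is only asserted, not argued, and it is not obvious that it ``again forces a same-color visible pair'' without further work.

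For comparison, the paper's proof avoids looking for a visible transversal triple altogether. It takes a counterexample with $|B|$ minimal, uses minimality to conclude that $\conv(U)$ contains no further point of the triangle's color (otherwise a smaller blocked unicolored triangle sits over one edge), notes that the three edge blockers cannot all share a color (else they themselves form a smaller $2$-color-blocked unicolored triangle), and then runs an alternation argument: with one red and two blue edge blockers, the two blues need a red separator, that red and the edge red need a blue separator, and the three blue points form a smaller unicolored triangle that is $2$-color-blocked — contradicting minimality. If you want to rescue your approach you would have to interleave this kind of color bookkeeping (which points are forced to which color, and which pairs must be separated) with your extremal choice; as written, the central step is unproven and the geometric statement it relies on does not hold without the coloring.
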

\begin{proof}
Consider a minimal counterexample: a unicolored triangle $T$ blocked by a
colored set $B$, such that $B$ is minimal among all such counterexamples.
Assume that the color of $T$ is black and the colors of $B$ are blue and red. 

There must be a red or blue point on each side of $T$. As a consequence of
minimality, we have that $T$ is black-empty, since choosing the black point
inside which is closest to one of the sides of $T$ and the two endpoints of this
side would define a unicolored triangle that is $2$-color-blocked by less
points (it does not contain the blocking points on the two other sides of $T$).

If the three blocking points on the sides of $T$ have the same color then they
form a unicolored triangle that is 2-color-blocked by fewer points than $T$,
again contradicting minimality.

\noindent
\begin{minipage}{0.55\textwidth}
Hence we may assume (w.l.o.g.) that one of the points on the sides is red
($R_1$) and two are blue ($B_1,B_2$). Now $B_1$ and $B_2$ must be separated by
a red point ($R_2$), then $R_1$ and $R_2$ must be separated by a blue point
$B_3$. Finally $B_1, B_2$ and $B_3$ form a unicolored triangle that is
$2$-color-blocked by fewer points than $T$, contradiction.
\end{minipage}
\begin{minipage}{0.4\textwidth}
\begin{center}
\begin{tikzpicture}[line cap=round,line join=round,x=0.75cm,y=0.75cm]
\clip(-0.68,-0.75) rectangle (7,4.86);
\draw (0,0)-- (2,4);
\draw (2,4)-- (6,0);
\draw (6,0)-- (0,0);
\draw (0.8,1.59)-- (3.51,2.49);
\draw (2.74,2.23)-- (2.6,0);
\begin{scriptsize}
\fill [bigblack] plot coordinates {(0,0)} node[above left] {};
\fill [bigblack] plot coordinates {(2,4)} node[above left] {};
\fill [bigblack] plot coordinates {(6,0)} node[above right] {};
\draw [bigblue]  plot coordinates {(0.8,1.59)} node[above left]{$B_1$};
\fill [bigblue] plot coordinates {(3.51,2.49)} node[above right] {$B_2$};
\fill [bigred] plot coordinates {(2.6,0)} node[above right] {$R_1$};
\fill [bigred] plot coordinates {(2.74,2.23)} node[above] {$R_2$};
\fill [bigblue] plot coordinates {(2.67,1.18)} node[above left] {$B_3$};
\end{scriptsize}
\end{tikzpicture}
\end{center}
\end{minipage}

\end{proof}

From this point onwards we fix $\ell=3$. It follows that in a properly coloured
set, the points of any color class are in general position.

\begin{claim}[\cite{KaraPorWood2005}]\label{mc3-3}
$mc_3(3) =6$.
\end{claim}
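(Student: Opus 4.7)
The interesting direction is the upper bound $mc_3(3)\le 6$; the lower bound is a matter of exhibiting one good configuration. For the upper bound the plan is to argue by contradiction, assuming $X$ is properly $3$-coloured with $|X|\ge 7$ and at most three collinear points, and then produce a $2$-color-blocked unicoloured triangle to contradict Lemma~\ref{triangle-blocking}.

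First, by pigeonhole some colour class, say the black one, contains at least three points, and by the observation immediately preceding the claim these black points are in general position. The key step will be to choose $T=abc$ as a black triangle of \emph{minimum area} among all triples of black points. If a fourth black point $d$ lay in $\conv(T)\setminus T$, one of the sub-triangles $abd$, $acd$, $bcd$ would be a strictly smaller black triangle, contradicting the choice of $T$; hence $T$ is black-empty. Setting $B := X\cap(\conv(T)\setminus T)$, black-emptiness forces $B$ to use only the two remaining colours, and $T\cup B\subseteq X$ inherits a proper colouring. Thus $T$ is $2$-color-blocked by $B$, contradicting Lemma~\ref{triangle-blocking}.

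For the lower bound I would exhibit a $6$-point configuration with $\ell=3$ whose visibility graph is $3$-chromatic: take three pairs $\{a_1,a_2\}, \{b_1,b_2\}, \{c_1,c_2\}$ arranged so that $b_1\in(a_1 a_2)$, $c_1\in(b_1 b_2)$, and $a_1\in(c_1 c_2)$ --- for instance $a_1=(0,0)$, $a_2=(2,0)$, $b_1=(1,0)$, $b_2=(1,2)$, $c_1=(1,1)$, $c_2=(-1,-1)$ --- placed generically so no further three points are collinear. A short case-check shows the visibility graph is $K_{2,2,2}$, which is $3$-chromatic by assigning each pair its own colour.

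The main obstacle will be the correct choice of monochromatic triangle: three arbitrary black points need not bound a black-empty triangle, so Lemma~\ref{triangle-blocking} cannot be applied to just any unicoloured triple. The minimum-area selection circumvents this and constitutes the one nontrivial move; beyond it the argument uses only pigeonhole and the general-position observation, so no further structure of $X$ is invoked.
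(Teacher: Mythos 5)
Your proof is correct and takes essentially the same route as the paper: pigeonhole for a monochromatic triple, the general-position observation for a colour class, a contradiction with Lemma~\ref{triangle-blocking}, and the same cyclic blocking construction of six points for the lower bound. Your minimum-area choice of the monochromatic triangle, which guarantees it is colour-empty so that the blocking set $X\cap(\conv(T)\setminus T)$ genuinely uses only the two remaining colours, is a small extra care step that the paper's own proof glosses over when it applies the lemma to an arbitrary unicoloured triple.
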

\begin{proof}
On one hand, if there were more than 6 points in a 3-colored point set, at
least three of them would have the same color.

\vspace{-0.1 cm}
\noindent
\begin{minipage}{0.67\textwidth}
Three unicolored points cannot be collinear (else at least two of them would be
adjacent in the visibility graph, as $\ell =3$), so they form a unicolored
triangle. Then this triangle would be 2-color-blocked, that contradicts
Lemma \ref{triangle-blocking}.

On the other hand, the figure on the right shows a properly 3-colored set with
6 points.
\end{minipage}
\begin{minipage}{0.3\textwidth}
\begin{center}
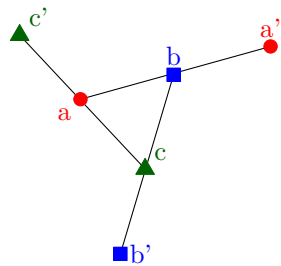

\begin{tikzpicture}[line cap=round,line join=round,>=triangle 45,
  x=2.5cm,y=2.5cm]
\begin{small}
\clip(0.5,-0.2) rectangle (2,1.5);
\draw (1.24,0.45)-- (0.58,1.16);
\draw (0.9,0.82)-- (1.9,1.1);(0,0)-- (1,2);
\draw (1.11,0)-- (1.39,0.95);(1,2)-- (3,0);
\draw [bigred] plot coordinates {(1.9,1.1)} node[above] {a'};
\fill [bigblue] plot coordinates{(1.11,0)} node[right] {b'};
\fill [biggreen] plot coordinates{(0.58,1.16)} node[above right] {c'};
\fill [biggreen] plot coordinates{(1.24,0.45)} node[above right] {c};
\draw [bigred] plot coordinates {(0.9,0.82)} node[below left] {a};
\fill [bigblue] plot coordinates{(1.39,0.95)} node[above] {b};
\end{small}
\end{tikzpicture}
\captionof{figure}{Figure $mc_3(3)$}
\end{center}
\end{minipage}

\end{proof}

\begin{claim}~\label{mc3-3.uniqueness}
Any properly 3-colored 6-point set is equivalent to the one shown on the figure
$mc_3(3)$.
\end{claim}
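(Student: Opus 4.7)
The plan is to use the blocking constraints to force a unique combinatorial structure on the six points. First, by pigeonhole and Lemma~\ref{triangle-blocking}, each color class has exactly 2 points: if some class had at least 3 points, they would form a unicolored triangle (no 3 collinear, by $\ell = 3$), 2-color-blocked by the remaining points---contradicting Lemma~\ref{triangle-blocking}. Label the pairs $\{a,a'\}, \{b,b'\}, \{c,c'\}$ and let $L_R, L_B, L_G$ denote the lines through each pair. Since same-color points are mutually invisible and $\ell = 3$, each pair's open segment contains exactly one blocker; denote these $p_R, p_B, p_G$. Note that each $p_X$ lies on $L_X$ trivially, and also on $L_Y$, where $Y$ is the color class of $p_X$ (since the two points of class $Y$ span $L_Y$). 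Also, for any two distinct colors $X, Y$, the lines $L_X$ and $L_Y$ are distinct: otherwise they would contain $4$ collinear points.

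Next I would show the three blockers have pairwise distinct colors. Suppose not: two of them, say $p_R$ and $p_B$, share a color class. This shared class must be green, since $p_R \neq R$ and $p_B \neq B$, so both $p_R, p_B \in L_G$. The third blocker $p_G$ has color red or blue; say red (the other case is symmetric), so $p_G \in L_R$. Then $L_R \cap L_G$ contains both $p_R$ (green) and $p_G$ (red), two distinct points, forcing $L_R = L_G$. But then this single line contains $\{a, a', c, c'\}$---four collinear points---contradicting $\ell = 3$. Therefore $(p_R, p_B, p_G)$ realizes one of the two derangements of $(R, B, G)$.

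With distinctly-colored blockers, the three distinct lines $L_R, L_B, L_G$ meet pairwise only at the blockers, so these blockers are the vertices of a triangle $T$. Since each $p_X$ lies strictly between the endpoints of pair $X$ on $L_X$, the ``outer'' endpoint of each pair lies on the extension of a specific side of $T$ beyond a specific vertex of $T$. Up to relabeling within each color class, this determines the combinatorial type of the configuration; the two possible derangements yield chirally mirrored configurations that are equivalent as colored point sets via a color-class-permuting bijection (for example, swapping two color classes wholesale while fixing the third), and such a bijection is readily checked to preserve all convex-hull containments. A direct verification then shows this is exactly the combinatorial type of Figure $mc_3(3)$. The main obstacle is the case analysis ruling out shared-color blockers, and constructing the explicit bijection that identifies the two chiral configurations, since a naive identity bijection between them fails to preserve the blocking-collinearity relations.
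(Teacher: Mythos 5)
Your proposal is correct and takes essentially the same route as the paper: pigeonhole plus Lemma~\ref{triangle-blocking} forces exactly two points per color class, and the no-four-collinear-points condition forces the cyclic blocking structure --- your ``blockers have pairwise distinct colors'' argument via coinciding lines is just a repackaging of the paper's step that $b,b'$ cannot be blocked by $a$ or $a'$ (and $c,c'$ not by $b$ or $b'$) --- after which the identification with Figure $mc_3(3)$ is read off at the same level of detail as in the paper. Your concern about the two derangements is harmless: the equivalence only has to preserve the color partition (not the color names), and convex-hull relations are invariant under reflection, so the paper absorbs this into its w.l.o.g.\ labelling.
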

\begin{proof}
There must be exactly two points in every color class. Let $a$ and $a'$ be two
points from the first color class (red). These two must be blocked by a point
$b$ with different color (blue). $b$ has a pair $b'$, that has the same color.
$b$ and $b'$ cannot be blocked by either $a$ or $a'$, as it would mean that
$a,\ a',\ b$ and $b'$ would be on the same line. So $b$ and $b'$ are blocked by
a new point $c$ (from the third color class, green). $c$ has a pair $c'$ with
the same color. These two cannot be blocked by $b$ or $b'$, that would cause 4
points on the same line. So they are blocked by $a$ or $a'$. Since $a$ and $a'$
played a symmetric role so far, we may assume w.l.o.g. that the blocker is $a$.
Now our notations give the desired bijection to Figure $mc_3(3)$.
\end{proof}

For any point set $X$, we denote by $\iconv(X)$ the interior of $\conv(X)$.

\noindent
\begin{minipage}{0.74\textwidth}
\begin{lemma}
Suppose that $\ell =3$ and consider a unicolored, color-empty triangle $T$. If
$T$ is 3-color-blocked by a set $B\subset \conv T$, then $B \cup T$ is equivalent to
one of the five instances below:
\end{lemma}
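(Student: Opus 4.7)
The plan is to enumerate all combinatorial types of $B\cup T$ by first bounding $|B|$ and the colors used by $B$, and then doing a case analysis on $|B|$. Two structural reductions come first: since $T$ is unicolored and $\ell=3$, each side of $T$ must contain exactly one point of $B$ (at least one, to separate the two same-colored endpoints; at most one, since the side already contains two $T$-vertices). So three points of $B$ lie on the sides of $T$, and any remaining points of $B$ lie strictly inside $\conv T$. Moreover, each vertex of $T$ is an extreme point of $\conv T$, so no $T$-vertex can lie strictly between two points of $B\subset\conv T$. Hence $B$ itself is a properly colored set of at most three colors, and by Claim~\ref{mc3-3} we obtain $|B|\le 6$. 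Finally, Lemma~\ref{triangle-blocking} rules out $B$ using only two colors, so $B$ uses all three non-black colors.

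The cases $|B|=3$ and $|B|=6$ can be dispatched quickly. For $|B|=3$ the three side-blockers must carry three distinct colors, since two same-colored side-blockers would demand a blocker strictly between them and force $|B|\ge 4$; this gives one instance. For $|B|=6$, Claim~\ref{mc3-3.uniqueness} forces $B$ to be equivalent to the $6$-point set of Figure~$mc_3(3)$, whose convex hull is a triangle with three extreme points of three distinct colors; these three extreme points must be the side-blockers of $T$, giving a second instance.

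For $|B|\in\{4,5\}$ I would first argue that the side-blocker colors form a $2{+}1$ pattern: the all-distinct case fails because an interior point would share a color with one side-blocker, yet the remaining two side-blockers cannot lie on the segment joining them (extending such a segment past a side-blocker leaves $T$). For $|B|=4$, the lone interior point must then carry the third color and lie on the segment between the two same-colored side-blockers, a third instance. For $|B|=5$, the two interior points create two new same-color pairs to block, while $\ell=3$ forbids any four collinear points. A careful sub-case enumeration shows that exactly two combinatorial patterns survive: either (i)~a single interior point of the third color sits at the intersection of the two monochromatic lines and simultaneously blocks both pairs, in which case $\conv B$ is a quadrilateral with one extreme point interior to $T$; or (ii)~each of the two pairs is blocked by a distinct point on its own monochromatic line, in which case $\conv B$ is a triangle. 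These give the fourth and fifth instances.

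The main obstacle is the $|B|=5$ step: one must methodically use $\ell=3$ to exclude candidate configurations that would create four collinear points, and then verify that exactly the two patterns above remain and are genuinely non-equivalent (e.g.\ via the combinatorial type of $\conv B$, which has four extreme points in case (i) and only three in case (ii)). The other cases, as well as the elimination of the all-distinct side pattern, are comparatively routine.
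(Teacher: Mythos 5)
Your plan follows essentially the paper's route: exactly one blocker per side of $T$, the bound $|B|\le 6$ via Claim~\ref{mc3-3} applied to $B$ alone, a case split on the number of interior blockers, Claim~\ref{mc3-3.uniqueness} for the six-point case, and the same five outcomes (one instance each for $|B|=3,4,6$ and two for $|B|=5$, distinguished exactly as in the paper by whether one interior point blocks both monochromatic pairs or the two pairs have distinct blockers). The cases $|B|=3,4,6$ are handled correctly and in the same way as the paper.

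The one step that does not go through as written is your preliminary claim that for $|B|=5$ the side-blockers use only two colors. Your parenthetical argument only shows that a \emph{side-blocker} cannot lie strictly between an interior point and the side-blocker of the same color; with two interior points the \emph{other interior point} is a perfectly good candidate blocker, so the all-distinct pattern is not yet excluded. What kills it is the $\ell=3$ argument you only gesture at later: if both interior points share colors with side-blockers, each of the two resulting pairs can only be blocked by the other interior point, and the two betweenness conditions put both interior points and both side-blockers on one line, i.e.\ four collinear points. The paper in fact skips the ``$2{+}1$ pattern'' preliminary altogether: for two interior points it argues directly that some interior point $i$ shares a color with a side point $s_1$, that the only possible blocker of $(i,s_1)$ is the other interior point $i'$, that consequently both $i$ and $i'$ see the remaining two side points, which are therefore forced to share the third color, and that their blocker must be $i$ or $i'$ --- yielding Instances 3 and 4 with no further enumeration. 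Your deferred ``careful sub-case enumeration'' can be made to work, but you should make the collinearity argument explicit, since it is the ingredient your stated justification is missing; adopting the paper's direct derivation would also shorten the $|B|=5$ case considerably.
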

\end{minipage}
\begin{minipage}{0.24\textwidth}
\begin{tikzpicture}[line cap=round,line join=round,>=triangle 45
  ,x=1.0cm,y=1.0cm]
\clip(-0.2,-0.3) rectangle (3.1,2.5);
\draw (0,0)-- (3,0);
\draw (0,0)-- (1,2);
\draw (1,2)-- (3,0);
\fill [sblack] plot coordinates{(0,0)} node[below left] {};
\fill [sblack] plot coordinates{(3,0)} node[below right] {};
\fill [sblack] plot coordinates{(1,2)} node[above right] {};
\draw [sred] plot coordinates {(1.9,1.1)} node[above left] {};
\fill [sblue] plot coordinates{(1.34,0)} node[above right] {};
\fill [sgreen] plot coordinates {(0.58,1.16)} node[below left] {};
\end{tikzpicture}
\vspace{-0.4cm}
\captionof{figure}{Inst. 1}
\end{minipage}

\noindent
\begin{minipage}{0.24\textwidth}
\begin{tikzpicture}[line cap=round,line join=round,>=triangle 45
  ,x=1.0cm,y=1.0cm]
\clip(-0.2,-0.3) rectangle (3.1,2.5);
\draw (0,0)-- (3,0);
\draw (0,0)-- (1,2);
\draw (1,2)-- (3,0);
\draw (1.9,1.1)-- (0.58,1.16);
\draw (1.1,1.14)-- (1.34,0);
\fill [sblack] plot coordinates{(0,0)} node[below left] {};
\fill [sblack] plot coordinates{(3,0)} node[below right] {};
\fill [sblack] plot coordinates{(1,2)} node[above right] {};
\draw [sred] plot coordinates {(1.9,1.1)} node[above left] {};
\fill [sblue] plot coordinates{(1.34,0)} node[above right] {};
\draw [sred] plot coordinates {(0.58,1.16)} node[below left] {};
\fill [sgreen] plot coordinates {(1.1,1.14)} node[below left] {};
\end{tikzpicture}
\vspace{-0.4cm}
\captionof{figure}{Inst. 2}
\end{minipage}
\begin{minipage}{0.24\textwidth}
\begin{tikzpicture}[line cap=round,line join=round,>=triangle 45
  ,x=1.0cm,y=1.0cm]
\clip(-0.2,-0.3) rectangle (3.1,2.5);
\draw (0,0)-- (3,0);
\draw (0,0)-- (1,2);
\draw (1,2)-- (3,0);
\draw (1.9,1.1)-- (0.58,1.16);
\draw (1.1,1.14)-- (1.34,0);
\fill [sblack] plot coordinates{(0,0)} node[below left] {};
\fill [sblack] plot coordinates{(3,0)} node[below right] {};
\fill [sblack] plot coordinates{(1,2)} node[above right] {};
\draw [sred] plot coordinates {(1.9,1.1)} node[above left] {};
\fill [sblue] plot coordinates{(1.34,0)} node[above right] {};
\draw [sred] plot coordinates {(0.58,1.16)} node[below left] {};
\fill [sblue] plot coordinates {(1.1,1.14)} node[below left] {};
\fill [sgreen] plot coordinates {(1.22,0.56)} node[below left] {};
\end{tikzpicture}
\vspace{-0.4cm}
\captionof{figure}{Inst. 3}
\end{minipage}
\begin{minipage}{0.24\textwidth}
\begin{tikzpicture}[line cap=round,line join=round,>=triangle 45
  ,x=1.0cm,y=1.0cm]
\clip(-0.2,-0.3) rectangle (3.1,2.5);
\draw (0,0)-- (3,0);
\draw (0,0)-- (1,2);
\draw (1,2)-- (3,0);
\draw (0.26,0.53)-- (2.4,0.6);
\draw (1.1,1.14)-- (1.34,0);
\fill [sblack] plot coordinates{(0,0)} node[below left] {};
\fill [sblack] plot coordinates{(3,0)} node[below right] {};
\fill [sblack] plot coordinates{(1,2)} node[above right] {};
\draw [sred] plot coordinates {(0.26,0.53)} node[above left] {};
\fill [sblue] plot coordinates{(1.34,0)} node[above right] {};
\draw [sred] plot coordinates {(2.4,0.6)} node[below left] {};
\fill [sblue] plot coordinates {(1.1,1.14)} node[below left] {};
\fill [sgreen] plot coordinates {(1.22,0.56)} node[below left] {};
\end{tikzpicture}
\vspace{-0.4cm}
\captionof{figure}{Inst. 4}
\end{minipage}
\begin{minipage}{0.24\textwidth}
\begin{tikzpicture}[line cap=round,line join=round,>=triangle 45
  ,x=1.0cm,y=1.0cm]
\clip(-0.2,-0.3) rectangle (3.1,2.5);
\draw (0,0)-- (3,0);
\draw (0,0)-- (1,2);
\draw (1,2)-- (3,0);
\draw (1.24,0.45)-- (0.58,1.16);
\draw (0.9,0.82)-- (1.9,1.1);(0,0)-- (1,2);
\draw (1.11,0)-- (1.39,0.95);(1,2)-- (3,0);
\fill [sblack] plot coordinates{(0,0)} node[below left] {};
\fill [sblack] plot coordinates{(3,0)} node[below right] {};
\fill [sblack] plot coordinates{(1,2)} node[above right] {};
\draw [sred] plot coordinates {(1.9,1.1)} node[above left] {};
\fill [sblue] plot coordinates{(1.11,0)} node[above right] {};
\fill [sgreen] plot coordinates{(0.58,1.16)} node[above right] {};
\fill [sgreen] plot coordinates{(1.24,0.45)} node[above right] {};
\draw [sred] plot coordinates {(0.9,0.82)} node[above left] {};
\fill [sblue] plot coordinates{(1.39,0.95)} node[above right] {};
\end{tikzpicture}
\vspace{-0.4cm}
\captionof{figure}{Inst. 5}
\end{minipage}

\begin{proof}
$B$ must have one point on each side of $T$, we denote the set of these three
points by $B^s$, and $B^{in} = B \setminus B^s$. $B$ is a properly 3-colored
set, so $|B| \leq 6$ by Claim \ref{mc3-3}. Hence $|B^{in}| \leq 3$.
 
\noindent \textbf{Case 1:} $|B^{in}|=0$ \\ \noindent The points of $B^s$ see
each other, so they must have different colors.  It means that $B \cup T$ is
equivalent to Instance 1. 

\noindent \textbf{Case 2:} $|B^{in}|=1$ \\
\noindent The only point $p$ of $B$ in $\iconv(T)$ can block only one pair of
$B^s$, so $B^s$ needs at least 2 colors. $p$ sees all the other points of $T
\cup B$, so it must have a unique color. Then only two colors remain for $B^s$,
the only way to color it with two colors is shown in Instance 2.  

\noindent \textbf{Case 3:} $|B^{in}|=2$ \\
\noindent The two points of $B^{in}$ see each other, so they have different
colors. They cannot block all three visibilities between $B^s$, so $B^{in}$
must have at least two colors, too. Hence there are points $i \in B^{in}$ and
$s_1 \in B^s$ with the same color. $i$ and $s_1$ can only be blocked by the
other point of $B^{in}$, call it $i'$. Now $i$ and $i'$ has different colors,
and both can see both points of $B^s \setminus \{s_1\} = \{s_2,s_3\}$, hence
$s_2$ and $s_3$ must have the same color.  So the visibility between $s_2$ and
$s_3$ has to be blocked. The blocking point can be either $i$ or $i'$, those
correspond to the cases (3) and (4).  

\noindent \textbf{Case 4:} $|B^{in}|=3$ \\
\noindent Now $|B|=6$ and $B$ is properly 3-colored, so by Claim
\ref{mc3-3.uniqueness}, $B$ is equivalent to the set shown on Figure $mc_3(3)$.
It is easy to check that $B^s$ must be formed by $a',\ b'$ and $c'$ (unless
some of them would be outside $T$), then the equivalence is straightforward.

\end{proof}

\smallskip

The following 2 lemmas and Theorem \ref{mc_3(4)} are established in
\cite{aloupis2013blocking}.  Despite the differences between the definitions, their
proofs are directly applicable here. We included the proofs in the appendix for
completeness.

\noindent
\begin{minipage}{0.5\textwidth}
\begin{lemma}[\cite{aloupis2013blocking}] \label{convex4block}
Let $Q$ be the set of the vertices of a convex quadrilateral. Suppose that $Q$
is unicolored and color-empty. Then any blocking set $B$ of $Q$ is equivalent
to the one shown on the figure at the right.
\end{lemma}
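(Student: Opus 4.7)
The plan is to cut $Q$ along a diagonal and reduce the problem to the preceding classification of three-color-blocked unicolored color-empty triangles. First I would observe that since $\ell=3$, each of the four sides and the two diagonals of $Q$ contains at most one point of $B$ beyond the vertices; since every visibility in $Q$ must be blocked, each side contains exactly one blocker, while the two diagonals are blocked either by one interior point each or by a single common blocker sitting at their intersection. This already bounds the rough shape of $B$.

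Next, fix the diagonal $v_1v_3$ and set $T_1 = v_1v_2v_3$, $T_2 = v_1v_3v_4$. Both triangles are unicolored and color-empty (the latter inherited from $Q$), and $B_i := B \cap \conv(T_i)$ is a blocking set of $T_i$. By Lemma~\ref{triangle-blocking}, each $B_i$ must use at least three colors; since $B$ has only three non-black colors at its disposal, both halves are three-color-blocked and use exactly the same palette. The preceding five-instance lemma therefore supplies a short list of candidate configurations for each of $B_1 \cup T_1$ and $B_2 \cup T_2$.

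The remainder of the proof is reconciliation. The shared side $v_1v_3$ of $T_1$ and $T_2$ forces the side-blocker on $v_1v_3$ (if present) to play the same structural role in both instances, which fixes its color consistently and slashes the list of admissible pairings. Adding the requirement that the visibility $v_2v_4$ be blocked pins down the interior blockers further, and the global $\ell=3$ constraint forbids a side-blocker of one half from aligning with two points of the other half. After these eliminations, a single compatible pair of instances should remain, and it is equivalent to the configuration shown in the figure.

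The main obstacle is the case analysis: with up to five instances per half there are in principle up to twenty-five pairings to examine, and each must be either realized or ruled out. I expect the vast majority to fall quickly, either to a color collision on a newly exposed visibility across $v_1v_3$, or to a four-point collinearity assembled from a side-blocker on one side of the diagonal and two points on the other. Carefully tracking the color of the shared-diagonal blocker and of the four side-blockers across both halves is where the bookkeeping requires the most attention.
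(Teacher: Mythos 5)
Your reduction itself is sound, and it is a genuinely different route from the paper's: the paper does not use the five-instance lemma here at all; it sets $B'=B\cap\conv(Q)$, observes that $B'$ is properly 3-colored with at most 3 collinear points, so $|B'|\le 6$ by Claim~\ref{mc3-3}, hence $|B'|\in\{5,6\}$ (four side blockers plus at least one interior point), and settles the two cases in a few lines, the 6-point case via Lemma~\ref{triangle-blocking}. Your splitting along $v_1v_3$ does work in principle: each $T_i$ is unicolored and color-empty, the diagonal $v_1v_3$ carries exactly one point of $B$, shared by both halves, and since $\conv(T_i)$ is convex and no vertex of $Q$ can lie in the open segment between two points of $\conv(Q)$, every blocker of a same-colored pair of $B_i:=B\cap\conv(T_i)$ lies again in $B_i$; thus $T_i\cup B_i$ is properly colored, $B_i$ blocks $T_i$ with exactly three colors by Lemma~\ref{triangle-blocking}, and the five-instance classification applies to each half.

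The genuine gap is that the entire content of the lemma, namely uniqueness, is deferred to a case analysis you only announce (``a single compatible pair of instances should remain'') and never carry out, and you omit the one counting ingredient that makes that analysis collapse. Since $B$ itself is properly 3-colored with no 4 collinear points, Claim~\ref{mc3-3} gives $|B|\le 6$; the two halves overlap only in the single diagonal blocker, so $|B_1|+|B_2|\le 7$, which forces each half to be Instance 1 or Instance 2 with at most one Instance 2 --- two or three pairings instead of twenty-five. Without this bound you must also exclude pairings involving Instances 3--5 (up to eleven points in total), and the two mechanisms you list (a color collision across $v_1v_3$, a 4-point collinearity) do not obviously dispose of those; extra counting or cascading-visibility arguments would be needed that you have not supplied. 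Moreover, even the surviving pairing (Instance 1, Instance 1) still requires work you have not done: the blocker of $v_2v_4$ must be shown to be the shared diagonal point (placing it at the intersection of the diagonals), and one must argue that side blockers on adjacent sides of $Q$ cannot be blocked by any available point, in particular not by that intersection point; this is what forces the alternating coloring of the figure. Finally, your opening claim that the side/diagonal observations already ``bound the rough shape of $B$'' overstates matters: at that stage nothing excludes further points of $B$ off those six lines; the size control comes only from Claim~\ref{mc3-3} (or, more laboriously, from the instance classification itself).
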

\end{minipage}
\begin{minipage}{0.4\textwidth}
\begin{flushright}
\begin{tikzpicture}[line cap=round,line join=round,x=1.0cm,y=1.0cm]
\clip(-0.5,-1.1) rectangle (5.4,3);
\draw (0,0)-- (4,-1);
\draw (4,-1)-- (5,2);
\draw (5,2)-- (1,2);
\draw (1,2)-- (0,0);
\draw (1,2)-- (4,-1);
\draw (0,0)-- (5,2);
\draw (0.48,0.96)-- (4.57,0.71);
\draw (2.39,2)-- (1.86,-0.46);
\begin{footnotesize}
\fill [bigblack] plot coordinates{(0,0)} node[above left] {$q_1$};
\fill [bigblack] plot coordinates {(4,-1)} node[above right] {$q_2$};
\fill [bigblack] plot coordinates {(5,2)} node[above right] {$q_3$};
\fill [bigblack] plot coordinates {(1,2)} node[above left] {$q_4$};
\fill [biggreen] plot coordinates {(2.14,0.86)} node[below left] {$z$};
\fill [bigblue] plot coordinates {(1.86,-0.46)} node[below left] {$y_{12}$};
\draw [bigred] plot coordinates {(4.57,0.71)} node[above left] {$y_{23}$};
\fill [bigblue] plot coordinates {(2.39,2)} node[above right] {$y_{34}$};
\draw [bigred] plot coordinates {(0.48,0.96)} node[above left] {$y_{41}$};
\end{footnotesize}
\end{tikzpicture}
\end{flushright}
\end{minipage}

\begin{remark}[\cite{aloupis2013blocking}]
The above 9-point set is maximal, i.e. if a 4-colored visibility
graph has 4 unicolored points in convex position, then it has exactly 9 points.
\end{remark}

\begin{lemma}[\cite{aloupis2013blocking}] \label{concave4block}
A unicolored concave set of 4 points can be blocked by 3 colors only the
following way:

\begin{center}
\begin{tikzpicture}[line cap=round,line join=round,x=1.1cm,y=1.1cm]
\clip(-2.5,-1.28) rectangle (2.46,2.4);
\draw (2,-1)-- (0,2);
\draw (-2,-1)-- (2,-1);
\draw (0,2)-- (-2,-1);
\draw (0,0)-- (0,2);
\draw (0,0)-- (2,-1);
\draw (0,0)-- (-2,-1);
\draw (-1,-0.5) -- (1.7,-0.5);
\draw (-0.33,1.5) -- (1,-0.5);
\draw (-1.33,-1) -- (0,1);
\begin{scriptsize}
\fill [sred] plot coordinates{(2,-1)} node[above right] {$x_1$};
\fill [sred] plot coordinates{(0,2)} node[above] {$x_2$};
\fill [sred] plot coordinates{(-2,-1)} node[above left] {$x_3$};
\fill [sred] plot coordinates{(0,0)} node[below] {$x_4$};
\fill [sblue] plot coordinates{(0,1)} node[right] {$s_{24}$};
\draw [sblack] plot coordinates{(1,-0.5)} node[below] {$s_{14}$};
\fill [sgreen] plot coordinates{(1.7,-0.5)} node[above right] {$s_{12}$};

\fill [sblue] plot coordinates{(-1.33,-1)} node[below] {$s_{13}$};
\fill [sgreen] plot coordinates{(-1,-0.5)} node[above left] {$s_{34}$};
\draw [sblack] plot coordinates{(-0.33,1.5)} node[above left] {$s_{23}$};
\end{scriptsize}
\end{tikzpicture}
\end{center}
\end{lemma}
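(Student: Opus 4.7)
The plan is to determine the combinatorial skeleton of the blocker set using $\ell=3$, fix the colors via Lemma~\ref{triangle-blocking} applied to the three inner triangles, and finally pin down the geometric arrangement by forcing the monochromatic blocker pairs to themselves be blocked.

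First I would invoke $\ell=3$: each of the six open segments $(x_i x_j)$ contains at most one further point of the set, so there are exactly six distinct non-red blockers $s_{ij}$, one per red pair. Next, for each $\{i,j\}\subset\{1,2,3\}$ the inner triangle $T_{ij}=\conv(x_i,x_j,x_4)$ is unicolored and color-empty, and a short geometric check shows that both the third outer vertex $x_k$ and every blocker other than $s_{ij},s_{i4},s_{j4}$ lies outside $T_{ij}$. Hence the blocker set of $T_{ij}$ within our ten-point configuration is exactly $\{s_{ij},s_{i4},s_{j4}\}$, and by Lemma~\ref{triangle-blocking} these three points must carry all three available colors. Writing $c_{ij}$ for the color of $s_{ij}$, the three resulting ``all distinct'' conditions collapse by elementary elimination to the opposite-edge pairing
\[ c_{12}=c_{34},\qquad c_{13}=c_{24},\qquad c_{14}=c_{23}. \]

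Then I would block the three monochromatic blocker pairs $\{s_{12},s_{34}\}$, $\{s_{13},s_{24}\}$, $\{s_{14},s_{23}\}$. The open segment connecting each pair lies strictly inside $\conv(x_1,x_2,x_3)$ apart from at most one endpoint on the outer boundary, so neither an outer-edge blocker $s_{ab}$ with $\{a,b\}\subset\{1,2,3\}$ nor a red $x_a$ can serve as the blocker (in the latter case, the pair's line would coincide with either line $x_i x_j$ or line $x_k x_l$ from the monochromatic pair $\{s_{ij},s_{kl}\}$, forcing four collinear points). Hence each pair must be blocked by one of $s_{14},s_{24},s_{34}$, and inspection shows that each pair has exactly two of these as candidates, yielding a bipartite 6-cycle between pairs and candidate blockers.

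The main obstacle is the final step: showing that this assignment is forced up to equivalence. If some $s_{i4}$ blocked two of the three monochromatic pairs, the forced blocker of the third pair (one of the two remaining among $s_{14},s_{24},s_{34}$) would already lie on one of the two resulting collinear lines through $s_{i4}$, putting four points of the set on that line and violating $\ell=3$. Hence the blocker assignment must be a perfect matching of the bipartite 6-cycle; there are exactly two such matchings, interchanged by the relabelling $x_1\leftrightarrow x_2$, so both yield configurations equivalent to the one in the figure.
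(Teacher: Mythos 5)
There is a genuine gap: your argument silently assumes that the blocking set consists of nothing but the six segment blockers $s_{ij}$. The lemma, however, is a statement about an \emph{arbitrary} blocking set $B\subset\conv(X)\setminus X$ with at most three colors, and such a $B$ may a priori contain further points that lie on no segment $x_ix_j$, namely points in $\iconv(T_1)\cup\iconv(T_2)\cup\iconv(T_3)$ (the interiors of the three red-empty triangles $x_ix_jx_4$), and even a color class of four points. Your key step --- ``every blocker other than $s_{ij},s_{i4},s_{j4}$ lies outside $T_{ij}$, hence these three points carry all three colors'' --- is exactly what fails in that situation: if $B$ has a point in $\iconv(T_{ij})$, Lemma \ref{triangle-blocking} only forces the \emph{whole} blocker set of $T_{ij}$ (segment points plus interior points) to use three colors, so the opposite-edge color pairing $c_{12}=c_{34}$, $c_{13}=c_{24}$, $c_{14}=c_{23}$ is no longer forced, and your matching argument never gets off the ground. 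The paper's proof spends most of its effort precisely here: it first bounds each color class by $3$ (using Lemma \ref{convex4block} and a concave-position argument), deduces that at most three points of $B$ lie in the union of the triangle interiors, and then eliminates all nonempty interior distributions $(0,0,1),(0,0,2),(0,0,3),(0,1,1),(0,1,2),(1,1,1)$ by a lengthy case analysis; only its Case $(0,0,0)$ corresponds to the situation you treat.

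For the configuration with empty triangle interiors your reasoning is sound and in fact matches the paper's Case 1, with a nicer bookkeeping of the final step (the bipartite $6$-cycle between monochromatic pairs and candidate blockers, and the collinearity argument that forces a perfect matching, the two matchings being reflections of each other). But as written the proposal proves only this special case, not the lemma.
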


\begin{remark}[\cite{aloupis2013blocking}]
The above 10-point set is maximal, i.e. if a 4-colored visibility
graph has 4 unicolored points in concave position, then it has exactly 10
points.
\end{remark}

\begin{thm} \label{mc_3(4)}
$mc_3(4)=12$.
\end{thm}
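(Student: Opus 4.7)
The plan is to establish $mc_3(4)\le 12$ by a case split on the maximum color class size in a properly 4-colored set $X$ with at most three collinear points, and to match this with an explicit 12-point construction for the lower bound. If every color class has at most three points, then $|X|\le 4\cdot 3 = 12$ follows immediately.

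If instead some color class, say red, has at least 4 points, I will show the stronger bound $|X|\le 10$. The red points are in general position (as already noted, this follows from proper coloring together with $\ell=3$), so any 4 red points form either a convex quadrilateral or a concave ``triangle plus inner point'' configuration. If some 4 red points are in convex position, the remark following Lemma \ref{convex4block} forces $|X|=9$. Otherwise no 4 red points are in convex position, which by Erd\H{o}s--Szekeres forces the red class to have exactly 4 points, all lying in concave position; the remark following Lemma \ref{concave4block} then forces $|X|=10$. Either way $|X|\le 10$, and combined with the previous case this yields $mc_3(4)\le 12$.

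For the matching lower bound $mc_3(4)\ge 12$, I would exhibit (or cite from \cite{aloupis2013blocking}) a properly 4-colored 12-point set with three points per color class and no three collinear, which realizes exactly the extremal branch of the above split where no color class exceeds three points.

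There is essentially no hidden technical difficulty on the upper-bound side once the structural lemmas and their maximality remarks are available: the argument reduces to a clean two-way case analysis with a single appeal to Erd\H{o}s--Szekeres to rule out a red class of size $\ge 5$ avoiding a convex quadruple. The heavier content has already been pushed into Lemmas \ref{convex4block} and \ref{concave4block}. The only remaining task, and the one place where care is actually needed, is presenting and verifying the 12-point lower-bound example (in particular, checking that each monochromatic pair is blocked and that no three points are collinear).
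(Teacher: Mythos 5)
Your proposal is correct and follows essentially the same route as the paper: pigeonhole to a color class of size at least $4$, then the maximality remarks attached to Lemmas \ref{convex4block} and \ref{concave4block} force $|X|\le 10$, while a $12$-point construction (the paper exhibits two, from \cite{aloupis2013blocking}) gives the matching lower bound. Your appeal to Erd\H{o}s--Szekeres is harmless but unnecessary, since any four points of the large color class are already in convex or concave position, which is all the case split needs.
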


\section{Main result}

\begin{thm}\label{main}
  A unicolored convex hexagon cannot be 4-color-blocked if $\ell=3$.
\end{thm}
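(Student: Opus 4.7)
The plan is to argue by contradiction: assume the unicolored convex hexagon $H = v_1 v_2 \dots v_6$ (all black) admits a 4-color-blocking set $B \subset \conv(H) \setminus H$. I first observe that $B$ must use all four colors. No diagonal of a convex hexagon crosses the interior of a side, so each of the six sides of $H$ forces a distinct blocker on it; these six points alone would saturate the bound $|B| \le mc_3(3) = 6$ from Claim~\ref{mc3-3} if $B$ used only three colors, leaving every diagonal unblocked, a contradiction. Hence $B$ uses four colors and $|B| \le mc_3(4) = 12$ by Theorem~\ref{mc_3(4)}, so at most six interior points of $B$ must cover nine diagonals, forcing several of them to lie at diagonal intersections.

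The main structural move is to consider the two interlocking unicolored color-empty triangles $T_o = v_1v_3v_5$ and $T_e = v_2v_4v_6$; color-emptiness descends from $H$ because any stray black point in $\conv(T_o) \cup \conv(T_e) \subset \conv(H)$ would already violate color-emptiness of $H$. Set $B_o := B \cap \conv(T_o)$ and $B_e := B \cap \conv(T_e)$. By Lemma~\ref{triangle-blocking} each of $B_o, B_e$ uses at least three colors, so their palettes share at least two of the four colors of $B$.

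I split on the number of colors used by $B_o$. When $B_o$ uses exactly three, the lemma immediately preceding this theorem forces $B_o \cup T_o$ to be equivalent to one of Instances 1--5; in each case the positions and colors of the blockers on the short diagonals $v_1v_3, v_3v_5, v_5v_1$ are rigidly determined. A symmetric dichotomy applies to $B_e$. Combining these classifications with the at-most-one remaining fourth color and the forced blockers on the six sides and three main diagonals $v_1v_4, v_2v_5, v_3v_6$, I would check each of the resulting configurations and extract a concrete contradiction: either two same-colored points become mutually visible, or three same-colored points become collinear (violating $\ell=3$), or some pair of hexagon vertices is left unblocked. When $B_o$ uses all four colors, combining $B_o$ with the symmetric three-color lower bound on $B_e$ and the six forced distinct side-blockers pushes $|B|$ past $mc_3(4)=12$, contradicting Theorem~\ref{mc_3(4)}.

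The principal obstacle will be Instance~5 inside the three-color subcase: there $B_o$ is itself the rigid properly 3-colored six-point configuration of Claim~\ref{mc3-3.uniqueness}, leaving a single fourth color inside $\conv(T_o)$ for all remaining blocking duties. Showing rigorously that this single color cannot simultaneously be compatible with the symmetric rigid $B_e$-configuration, block every pair incident to $v_2, v_4, v_6$, and remain properly colored against the dense $B_o$---all without producing a collinearity or visibility violation---requires the most delicate planar argument, and verifying the analogous contradictions across Instances 1--4 accounts for the remaining bulk of the work.
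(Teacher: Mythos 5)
Your framework (four colors are forced, six distinct edge blockers, $|B|\le mc_3(4)=12$) matches the paper's opening, but your main device --- splitting the hexagon into the two alternate-vertex triangles $T_o=v_1v_3v_5$, $T_e=v_2v_4v_6$ and classifying $B_o=B\cap\conv(T_o)$, $B_e=B\cap\conv(T_e)$ by the five-instance lemma --- is a different route from the paper, which instead shows $10\le|X|\le 12$, proves that every color class of the blocker has at most $3$ points (Lemma \ref{atmost3}, via the maximal configurations of Lemmas \ref{convex4block} and \ref{concave4block}) and that no unicolored triangle lies wholly on the hexagon's edges (Lemma \ref{notonboundary}), and then runs an explicit case analysis over $|X|=10,11,12$. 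The problem is that your route is only an outline where it matters: ``check each of the resulting configurations and extract a concrete contradiction'' is precisely the content of the theorem, and you yourself defer Instance 5 and Instances 1--4 as remaining work. Note also that the five-instance lemma determines $B_o\cup T_o$ only up to the paper's equivalence (order type plus coloring), not ``rigidly determined positions''; superimposing the $T_o$- and $T_e$-classifications therefore does not reduce to finitely many pictures, and the contradictions must still be extracted by visibility/collinearity arguments of exactly the kind the paper's Cases 1--3 consist of. So the heart of the proof is missing.

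The one branch you claim to close outright --- $B_o$ using all four colors --- does not close. Since $\conv(T_o)$ and $\conv(T_e)$ overlap in the central region of the hexagon, $B_o$ and $B_e$ may share points, and all your counting yields is $|B|\ge 6+\max(|B_o|,|B_e|)\ge 10$, which is comfortably within the bound $mc_3(4)=12$; no contradiction with Theorem \ref{mc_3(4)} follows (indeed the paper has to work separately through blocking sets of sizes $10$, $11$ and $12$). To kill that branch you need an additional structural fact, such as the paper's Lemma \ref{atmost3} or a direct geometric analysis of a four-colored $B_o$, neither of which appears in your plan. (A smaller point: applying the five-instance lemma to $T_o$ presupposes that $B$ uses no point of the hexagon's color, which is consistent with the intended reading but should be stated, since the definition of $4$-color-blocking does not forbid it.) As it stands, the proposal is a plausible alternative strategy with the decisive case analysis unperformed and its only ``finished'' case resting on an invalid count.
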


\begin{proof}
In a unicolored convex hexagon the blocking set $X$ has to block 15 segments
defined by the 6 vertices and endpoints. We denote the vertex set of the
hexagon by $H$. The 6 edges of the hexagon need distinct blocking points. All
other points may block at most two diagonals, except maybe one blocker: if the
diagonals connecting opposite vertices are concurrent, then these can be
blocked by one point. It follows that the number of blocking points needed is
at least 10 (6 for the edges, and at least 4 for the 9 diagonals).

By Theorem \ref{mc_3(4)}, it follows that a blocking set cannot have more than
12 points.  Thus it is enough to show that $H$ cannot be 4-color-blocked by
10, 11 or 12 points.

We may assume that our blocking set $X$ lies in $\conv(H)$.
It is easy to observe that $X$ has at least 6 points in $\partial \conv(X)$:
the ones that block the edges of the hexagon.

To prove this theorem, we will use the following simple lemmas.

\begin{lemma}\label{atmost3}
The biggest color class of $X$ has size at most 3.
\end{lemma}
\begin{proof}
Suppose that a color class $C$ has at least 4 points. By Lemmas
\ref{convex4block} and \ref{concave4block}, we get that $X$ contains
one of two configurations. Since both of these configurations are maximal, $X$
is also equivalent to one of these configurations. We observe that for both
these configurations the number of points on the boundary of the convex hull is
less than 6, thus the equivalent of these configurations cannot block the
hexagon. 
\end{proof}

It follows that there are at least two unicolored triangles in the blocking
set.

\begin{lemma}\label{notonboundary}
A unicolored triangle $T$ of color $c$ cannot have all its vertices on the
edges of the hexagon.
\end{lemma}
\begin{proof}
Take any unicolored triangle $T'\neq T$ of color $c'$. To block $T'$, there
must be points of each of the $3$ blocking colors that differ from $c'$. Thus
$T'$ must contain  a point of color $c$. The only way this is possible inside
the convex hull of the hexagon is if two vertices of $T'$ lie on the same edge
as a point of color $c$. But then there would be 5 points on this edge: we
arrived at a contradiction.
\end{proof}

We are now ready to prove the theorem. We will check the cases
$|X|=10,\; 11, \; 12$.

\smallskip
\noindent\textbf{Case 1: } $|X|=10$. Take any unicolored triangle $T$. At least
one of its vertices must be in $\iconv(H)$ by Lemma \ref{notonboundary}. We
need at least $3$ points to block $T$, and all of these points lie in
$\iconv(H)$. It follows that all 4 points in $\iconv(H)$ lie on the boundary of
$T$, and they are in convex position. On the other hand, it can be verified
that the inner points of a 10-point blocking set of a convex hexagon need to be
in concave position (the intersection $p$ of the 3 diagonals that connect
opposite vertices will be in the convex hull of the three other inner blocking
points).

\begin{center}
\begin{tikzpicture}[line cap=round,line join=round,>=triangle 45
  ,x=1.5cm,y=1.5cm,extended line/.style={shorten >=-#1,shorten <=-#1}]
\clip(-1,0) rectangle (3,2.2);
\tikzstyle{every node}=[inner sep=0pt, minimum width=0pt]
\node (A) at (0,0.1) {};
\node (B) at (1.3,0) {};
\node (C) at (1.4,1) {};
\node (D) at (1,2.05) {};
\node (E) at (0.1,2) {};
\node (F) at (-0.5,1) {};
\draw (A)-- (B) --(C) -- (D) --(E) --(F) --(A);
\node [nred] (P) at (0.2,1) {};
\node [nred] (AB) at (0.65,0.05) {};
\node [nred] (CD) at (1.2,1.525) {};
\node [node distance = 1.2 cm, above of = AB, color=black] {$T$};
\draw (AB) edge node[nblue](x){} (CD) (CD) edge node[ngreen](y){}
   (P) (P) edge node[nblack](z){} (AB);
\end{tikzpicture}
\hspace{0.2cm}
\begin{tikzpicture}[line cap=round,line join=round,>=triangle 45,
  x=1.5cm,y=1.5cm]
\clip(-1.2,-1) rectangle (1.5,1.5);
\tikzstyle{every node}=[inner sep=0pt, minimum width=0pt]
\node (A) at (30:1) {};
\node (B) at (90:1.15) {};
\node (C) at (150:1.3) {};
\node (D) at (-150:1) {};
\node (E) at (-90:0.8) {};
\node (F) at (-30:1.4) {};
\node [nempty,label={[above right=3.5 pt] $p$}] at (0,0){};
\node [nempty] at (intersection of A--C and B--D){};
\node [nempty] at (intersection of E--C and F--D){};
\node [nempty] at (intersection of A--E and B--F){};
\draw (A)-- (B) --(C) -- (D) --(E) --(F) --(A);
\draw (A)--(C)--(E)--(A);
\draw (B)--(D)--(F)--(B);
\draw (A)--(D);
\draw (B)--(E);
\draw (C)--(F);
\end{tikzpicture}
\end{center}

\noindent\textbf{Case 2: } $|X|=11$. It follows that there are at least two
points in $\iconv(H)$ that have the same color (red), we denote them by $p$ and
$q$. We distinguish two subcases based on the number of red points.

(2a) There are two red points: $p$ and $q$. It follows that all other classes
have 3 points. Let $v$ be the (blue) blocking point of $p$ and $q$. The blue
triangle $T=(vv'v'')$ can not have any red point on $vv'$ or $vv''$, because
then there would be 4 collinear points. It also can not have any red point in
$\iconv(T)$, because then there would be $5$ points in $\iconv(H)\cap
\conv(T)$, and $\iconv(H)$ has one more point outside $\conv(T)$ since either
$p$ or $q$ is there. It follows that there is a red point on $v'v''$, suppose
it is $p$.

The green blocking point $g$ on the segment $vv''$ can see all points on the
right side of $\overrightarrow{pq}$, thus the remaining green points must lie
on the left side. The only point that can block them is the black point $b$.
Consequently $p$ and $q$ lie on the sides of the green triangle. Similarly, $p$
and $q$ lie on the sides of the black triangle.  It follows that there is a
black and a green blocking point on the right side of $\overrightarrow{v'v''}$,
and these points are on the sides of $H$. So $v'$ and $v''$ lie on the opposite
sides of the hexagon. Since all inner points are on the left side of
$\overrightarrow{v'v''}$, we get that there is a diagonal $d$ that is not
blocked.

\begin{center}
\begin{tikzpicture}[line cap=round,line join=round,x=1.5cm,y=1.5cm,
label distance = 3 pt]
\clip(-1,-0.5) rectangle (3,2.2);
\tikzstyle{every node}=[inner sep=0pt]
\node (A) at (0,0.1) {};
\node (B) at (1.3,0) {};
\node (C) at (1.4,1) {};
\node (D) at (1,2.05) {};
\node (E) at (0.1,2) {};
\node (F) at (-0.5,1) {};
\draw (A)-- (B) --(C) -- (D) --(E) --(F) --(A);
\node [nblue, label={above: $v$}] (v) at (0.2,1) {};
\draw (A) -- node[nblue, label={below: $v'$}](AB){} (B)
      (C) -- node[nblue, label={above right:$v''$}] (CD){} (D);
\draw (AB) -- node[nred, label={below right: $p$}](p){} (CD)
      (CD) -- node[ngreen, label={above left: $g$}](y){} (v)
      (v) -- node[nblack, label={below left: $b$}](z){} (AB);
\draw (p) -- (v) node[nred,pos=1.6,label={above: $q$}](q){};
\draw (v) -- (q);
\end{tikzpicture}
\hspace{1cm}
\begin{tikzpicture}[line cap=round,line join=round,x=1.5cm,y=1.5cm,
label distance = 3 pt]
\clip(-1,-0.5) rectangle (3,2.4);
\tikzstyle{every node}=[inner sep=0pt]
\node (A) at (0,0.1) {};
\node (B) at (1.3,0) {};
\node (C) at (1.4,1) {};
\node (D) at (1,2.05) {};
\node (E) at (0.1,2) {};
\node (F) at (-0.5,1) {};
\draw (A) -- node [nblue, label={below: $v'$}](AB){} (B);
\draw (B)--(C)-- (D);
\draw (D) --  node [nblue, label={above:$v''$}](DE){} (E);
\draw (E) --(F) --(A);
\draw (AB)--(DE);
\draw [dashed] (B)--(D);
\draw (B) -- node [nblack](BC){} (C);
\draw (C) -- node [ngreen](CD){} (D);
\draw (AB) -- node [nred, label={left: $p$}](p){} (DE);
\end{tikzpicture}
\end{center}

(2b) There are 3 red points. It follows that all inner points are on the convex
hull of the red triangle (because the red triangle needs at least 3 more inner
points to be blocked). Let $v$ be the blue point that blocks the two inner red
points $p$ and $q$.  The other inner points need to be black ($b$) and green
($g$). Wlog.\ we can suppose $v$ is on the left side of $\overrightarrow{bg}$.

There cannot be $3$ blue points, because only $b$ and $g$ can block points from
$v$, but then there would be two blue points on the right side of
$\overrightarrow{bg}$.  Since all inner points are on the left side (or on line
$bg$ itself), these would see each other. Thus there are $2$ blue points, $3$
black points and $3$ green points.

The remaining 2 black points could be blocked from $b$ by $v$,$p$, or $g$. If a
black point lies on the line $bp$, then it will see every possible for the
third black point on $bg$ or $bv$, thus no black point lies on $bp$. Similarly,
there is no further green point on $gq$.  It follows that at least one of the
dashed lines will have more than one of the remaining 5 points, which would
mean 4 points on one line.

\begin{center}
\begin{tikzpicture}[line cap=round,line join=round,x=1.5cm,y=1.5cm,
label distance = 3 pt]
\clip(-1,-0.5) rectangle (3,2.2);
\tikzstyle{every node}=[inner sep=0pt]
\node (A) at (0,0.1) {};
\node (B) at (1.3,0) {};
\node (C) at (1.4,1) {};
\node (D) at (1,2.05) {};
\node (E) at (0.1,2) {};
\node (F) at (-0.5,1) {};
\draw (A)-- (B) --(C) -- (D) --(E) --(F) --(A);
\node [nred, label={above: $p$}] (p) at (0.1,1.3) {};
\node [nred, label={below: $q$}] (q) at (0.7,0.5) {};
\draw (C) -- node[nred, label={above right:$r$}] (CD){} (D);
\draw (CD)-- node[ngreen, label={below: $g$}](g){} (p)
      (p) -- node[nblue, label={above: $v$}](b){} (q)
      (q) -- node[nblack, label={below: $b$}](v){} (CD);
\draw [extended line = 1cm, dashed] (b)--(g); 
\draw [extended line = 1cm, dashed] (g)--(v);
\draw [extended line = 1cm, dashed] (v)--(b);
\end{tikzpicture}
\end{center}

\noindent\textbf{Case 3: } $|X|$=12. Each color class has $3$ points. By
earlier observations, no color class can lie exclusively on the sides of $H$.
It follows that the size of color classes in $\iconv(H)$ is either $3,1,1,1$ or
$2,2,1,1$.

(3a) There is a unicolored triangle in $\iconv(H)$. This case is very similar
to case (2b). A blue point can not be blocked from $v$ by $r$, because such a
point would see all other possible third blue points on line $vb$ and $vq$, and
the same can be said for the pair $b,p$ and $q,g$. So the remaining 6 points
would need to lie on the three dashed lines, resulting in at least 4 points on
one line.

\begin{center}
\begin{tikzpicture}[line cap=round,line join=round,x=1.5cm,y=1.5cm,
label distance = 3 pt]
\clip(-1,-0.5) rectangle (3,2.2);
\tikzstyle{every node}=[inner sep=0pt]
\node (A) at (0,0.1) {};
\node (B) at (1.3,0) {};
\node (C) at (1.4,0.9) {};
\node (D) at (1,2.05) {};
\node (E) at (0.1,2) {};
\node (F) at (-0.5,1) {};
\draw (A)-- (B) --(C) -- (D) --(E) --(F) --(A);
\node [nred, label={above: $p$}] (p) at (0.1,1.3) {};
\node [nred, label={below: $q$}] (q) at (0.7,0.5) {};
\node [nred, label={above: $r$}] (r) at (1.0,1.4){};
\draw (r)-- node[ngreen, label={above: $g$}](g){} (p)
      (p) -- node[nblue, label={above: $v$}](b){} (q)
      (q) -- node[nblack, label={below: $b$}](v){} (r);
\draw [extended line = 1cm, dashed] (b)--(g); 
\draw [extended line = 1cm, dashed] (g)--(v);
\draw [extended line = 1cm, dashed] (v)--(b);
\end{tikzpicture}
\end{center}

(3b) The size of color classes in $\iconv(H)$ is $2,2,1,1$. Suppose there are 2
red and 2 blue points. At least one of the inner red and blue point pairs is
blocked by a green or black point, wlog.\ we can suppose the two red points
$r_1$ and $r_2$ are blocked by the green point $g$. We denote by $G$ the
vertices of the green triangle. There
must be at least one red point ($r_1$) in $\conv(G) \setminus G$. Consequently $r_2$ lies
outside $\conv(G)$.

First we consider the case where $r_1$ lies in $\iconv(G)$. There must be 4
points in $\conv(G) \setminus G$. It follows that $r_1$ is the blocker of the
two inner blue points $v_1$ and $v_2$, and the inner black point $b$ lies on
the third side of the triangle determined by $G$.

If the beam $v_1v_2$ lies on the neighbouring sides of $g$, then the third blue
point $v$ can not be blocked from both $v_1$ and $v_2$. To see this we can
suppose wlog. that $v$ is on the same side of $\overrightarrow{r_1r_2}$ as
$v_1$. But then there are no points that could block $v$ and $v_1$.

If the beam $v_1v_2$ is positioned otherwise, then the only point that can
block the third red point $r$ from $r_1$ is $b$. It follows that $r$ and $r_2$
can see each other.

\begin{center}
\begin{tikzpicture}[line cap=round,line join=round,x=1.5cm,y=1.5cm,
label distance = 3 pt]
\clip(-1,-0.5) rectangle (3,2.2);
\tikzstyle{every node}=[inner sep=0pt]
\node (A) at (0,0.1) {};
\node (B) at (1.3,0) {};
\node (C) at (1.4,1) {};
\node (D) at (1,2.05) {};
\node (E) at (0.1,2) {};
\node (F) at (-0.5,1) {};
\draw (A)-- (B) --(C) -- (D) --(E) --(F) --(A);
\node [ngreen, label={above: $g$}] (g) at (0.2,1) {};
\draw (A) -- node[ngreen](AB){} (B)
      (C) -- node[ngreen] (CD){} (D);
\draw (AB) -- node[nblack, pos=0.7, label={right: $b$}](b){} (CD)
      (CD) -- node[nblue, label={above left: $v_2$}](v2){} (g)
      (g) -- node[nblue, label={below left: $v_1$}](v1){} (AB)
      (v1) -- node[nred, label={below right: $r_1$}](r1){} (v2);
\draw (r1) -- (g) node[nred,pos=3.0,label={above: $r_2$}](r2){};
\draw (g) -- (r2);
\end{tikzpicture}
\begin{tikzpicture}[line cap=round,line join=round,x=1.5cm,y=1.5cm,
label distance = 3 pt]
\clip(-1,-0.5) rectangle (3,2.2);
\tikzstyle{every node}=[inner sep=0pt]
\node (A) at (0,0.1) {};
\node (B) at (1.3,0) {};
\node (C) at (1.4,1) {};
\node (D) at (1,2.05) {};
\node (E) at (0.1,2) {};
\node (F) at (-0.5,1) {};
\draw (A)-- (B) --(C) -- (D) --(E) --(F) --(A);
\node [ngreen, label={above: $g$}] (g) at (0.2,1) {};
\draw (A) -- node[ngreen](AB){} (B)
      (C) -- node[ngreen] (CD){} (D);
\draw (AB) -- node[nblue, pos=0.7, label={right: $v_2$}](v2){} (CD)
      (CD) -- node[nblack, label={above left: $b$}](b){} (g)
      (g) -- node[nblue, label={below left: $v_1$}](v1){} (AB)
      (v1) -- node[nred, label={below: $r_1$}](r1){} (v2);
\draw (r1) -- (g) node[nred,pos=2.0,label={above: $r_2$}](r2){};
\draw (g) -- (r2);
\end{tikzpicture}
\end{center}

Now we consider the case where $r_1$ lies on a side of $GT$. The red triangle
$RT=(r_1r_2r)$ lies in a closed half plane determined by the line $r_1r_2$, and
a side $s$ of $GT$ necessarily lies outside this half plane.
The blocking point of $s$ lies outside $\conv(RT)$, so $RT$ must be
blocked by 3 points of different colors: $p,q$ and $g$. Note that one of these
three blocking points is also needed to block a side of $GT$, suppose this
point is $p$.

$RT$ will have a black point $b$ and an inner blue point (suppose $v_1$) on its
sides. (So $\{p,q\}=\{b,v_1\}$.) Let $v_2$ be the other point that is outside
$RT$ and blocks a side of $GT$. Since there are 2 blue points in $\iconv(H)$,
$v_2$ must be blue.

We can distinguish two cases depending on the role of $p$: it can either block
$r_1r$ or $r_2r$. It is easy to verify that in both cases $v_2$ can see both
$p$ and $q$, but one of them is blue, which concludes the proof of this
theorem.  \qedhere

\begin{center}
\begin{tikzpicture}[line cap=round,line join=round,x=1.5cm,y=1.5cm,
label distance = 3 pt]
\clip(-1,-0.5) rectangle (3,2.4);
\tikzstyle{every node}=[inner sep=0pt]
\node (A) at (0,0.1) {};
\node (B) at (1.3,0) {};
\node (C) at (1.4,1) {};
\node (D) at (1,2.05) {};
\node (E) at (0.1,2) {};
\node (F) at (-0.5,1) {};
\draw (A)-- (B) --(C) -- (D) --(E) --(F) --(A);
\node [ngreen, label={below left: $g$}] (g) at (0.2,1) {};
\draw (A) -- node[ngreen](AB){} (B)
      (C) -- node[ngreen] (CD){} (D);
\draw (AB) -- node[nred, label={below right: $r_1$}](r1){} (CD)
      (CD) -- node[nempty, pos=0.35, label={above: $p$}](p){} (g)
      (g) -- node[nblue, label={below left: $v_2$}](v2){} (AB);
\draw (r1) -- (g) node[nred,pos=1.6,label={above: $r_2$}](r2){};
\draw (g) -- (r2);
\node [nred, label={above: $r$}] (r) at (intersection of r1--p and D--E) {};
\draw (r1)--(p)--(r);
\draw (r) -- node[nempty, label={above left: $q$}](q){} (r2);
\end{tikzpicture}
\begin{tikzpicture}[line cap=round,line join=round,x=1.5cm,y=1.5cm,
label distance = 3 pt]
\clip(-1,-0.5) rectangle (3,2.4);
\tikzstyle{every node}=[inner sep=0pt]
\node (A) at (0,0.1) {};
\node (B) at (1.3,0) {};
\node (C) at (1.4,1) {};
\node (D) at (1,2.05) {};
\node (E) at (0.1,2) {};
\node (F) at (-0.5,1) {};
\draw (A)-- (B) --(C) -- (D) --(E) --(F) --(A);
\node [ngreen, label={below left: $g$}] (g) at (0.2,1) {};
\draw (A) -- node[ngreen](AB){} (B)
      (D) -- node[ngreen, pos=0.3] (DE){} (E);
\draw (AB) -- node[nred, pos=0.4, label={below right: $r_1$}](r1){} (DE)
      (DE) -- node[nempty, label={above left: $p$}](p){} (g)
      (g) -- node[nblue, label={below left: $v_2$}](v2){} (AB);
\draw (r1) -- (g) node[nred,pos=2.6,label={below: $r_2$}](r2){};
\draw (g) -- (r2);
\node [nred, label={above right: $r$}]
  (r) at (intersection of r2--p and C--D) {};
\draw (r2)--(p)--(r);
\draw (r) -- node[nempty, label={right: $q$}](q){} (r1);
\end{tikzpicture}
\end{center}
\end{proof}

\begin{defn}
Let $h(s)$ be the smallest number such that a planar point set of $s$ points in
general position contains an empty $s$-gon.
\end{defn}

It is known that $h(4)=5$, $h(5)=10$ \cite{Harborth1978}, and the best known
upper bound for $h(6)$ is 463 \cite{Koshelev2007}. Horton \cite{Horton1983}
showed that $h(s)=\infty$ for all $s\ge7$.

\begin{thm}
$mc_3(5) \leq 5h(6)-5 \leq 2310.$
\end{thm}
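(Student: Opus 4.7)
The plan is to combine Theorem \ref{main} with a standard pigeonhole argument on color classes. Assume for contradiction that $X$ is a properly $5$-colored point set with $\ell=3$ collinear points maximum and $|X|\ge 5h(6)-4$. By the pigeonhole principle some color class $C\subseteq X$ has cardinality $|C|\ge\lceil(5h(6)-4)/5\rceil=h(6)$.

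Next I would argue that $C$ is in general position, which is exactly the observation recorded between Lemma \ref{triangle-blocking} and Claim \ref{mc3-3}: if three points of $C$ were collinear, the two extremal ones among them would lie at distance with only one $X$-point in the open segment they span (since $\ell=3$ forbids a fourth point on that line), so that third point would have to be differently colored, contradicting it being in $C$. More directly, any two consecutive collinear points of $C$ on a common line would be mutually visible but share a color, which is forbidden in a properly colored set.

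Now $|C|\ge h(6)$ points in general position contain, by the very definition of $h(6)$, six points forming an empty convex hexagon $H$, where ``empty'' means no other point of $C$ lies in $\conv(H)$. Because $H$ is monochromatic in the color of $C$, this emptiness is precisely the condition that $H$ be color-empty in the sense used earlier in the paper. Every pair of vertices of $H$ shares the color of $C$ and must therefore be blocked by some point of $X\setminus C$. The resulting blocker set uses only the remaining four colors, so $H$ is $4$-color-blocked. This contradicts Theorem \ref{main}, which rules out the $4$-color-blocking of a unicolored convex hexagon when $\ell=3$.

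Hence $|X|\le 5h(6)-5$, and plugging in the known upper bound $h(6)\le 463$ of Koshelev yields $mc_3(5)\le 5\cdot 463-5=2310$. The only nontrivial ingredient is Theorem \ref{main}; the rest is pigeonhole plus the definition of $h(6)$, so there is no real obstacle once the main theorem is in hand. The one small point that needs care is invoking general position for $C$ before applying $h(6)$, but this follows immediately from properness of the coloring together with $\ell=3$.
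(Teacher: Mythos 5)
Your proposal is correct and follows essentially the same route as the paper: pigeonhole to get a color class of size at least $h(6)$, the $\ell=3$ plus proper-coloring observation that this class is in general position, extraction of an empty convex hexagon, and the contradiction with Theorem \ref{main} via $4$-color-blocking, then $h(6)\le 463$ gives the numerical bound. The extra detail you supply (why emptiness with respect to $C$ yields color-emptiness and why the blockers use only four colors) is exactly the implicit content of the paper's one-line "It follows that $H$ is 4-color blocked."
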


\begin{proof}
The proof will be by contradiction. Take a properly 5-colored point set $P$
that has at least $5h(6)-4$ points. It follows that the largest color class $C$
has at least $h(6)$ points. Since $\ell=3$, the points of $C$ are in general
position, so they contain an empty convex hexagon $H$. It follows that $H$ is
4-color blocked, which contradicts Theorem
\ref{main}.
\end{proof}

\section{Conclusions and remarks}

We have shown that empty convex hexagons cannot be 4-color-blocked, and with
this result we were able to derive the first upper bound for the value
$mc_3(5)$. We believe that similar techniques could be used to investigate
whether points in non-convex positions can be blocked by only a few colors, and
such an investigation could lead to resolving Conjecture \ref{blbchi}, or at
least a lot of progress in bounding the values of $mc_{\ell}(k)$.

However, these proofs should be automated. We believe that it is possible to
develop an algorithm that systematically checks all cases, using a search tree
that is kept relatively small with proper pruning techniques.

Another interesting question would be the relationship of the maximum clique
size and the chromatic number in visibility graphs. Is there a sequence $G_n$
of visibility graphs such that $\omega(G_n) = o(\chi(G_n))$? An answer to this
question would illustrate the relationship between Conjecture \ref{blbc} and
Conjecture \ref{blbchi}.

\bibliographystyle{plain}
\bibliography{combinatorialgeometry}

\begin{thebibliography}{1}

\bibitem{aloupis2013blocking}
Greg Aloupis, Brad Ballinger, Sebastien Collette, Stefan Langerman, Attila
  P{\'o}r, and David~R Wood.
\newblock Blocking colored point sets.
\newblock In {\em Thirty Essays on Geometric Graph Theory}, pages 31--48.
  Springer, 2013.

\bibitem{barat2012empty}
J{\'a}nos Bar{\'a}t, Vida Dujmovi{\'c}, Gwena{\"e}l Joret, Michael~S Payne,
  Ludmila Scharf, Daria Schymura, Pavel Valtr, and David~R Wood.
\newblock Empty pentagons in point sets with collinearities.
\newblock {\em arXiv preprint arXiv:1207.3633}, 2012.

\bibitem{Harborth1978}
Heiko Harborth.
\newblock Konvexe {F}\"unfecke in ebenen {P}unktmengen.
\newblock {\em Elem. Math.}, 33(5):116--118, 1978.

\bibitem{Horton1983}
J.~D. Horton.
\newblock Sets with no empty convex {$7$}-gons.
\newblock {\em Canad. Math. Bull.}, 26(4):482--484, 1983.

\bibitem{KaraPorWood2005}
Jan K{\'a}ra, Attila P{\'o}r, and David~R. Wood.
\newblock On the chromatic number of the visibility graph of a set of points in
  the plane.
\newblock {\em Discrete Comput. Geom.}, 34(3):497--506, 2005.

\bibitem{Koshelev2007}
V.A. Koshelev.
\newblock On the {E}rd{\"o}s-{S}zekeres problem.
\newblock In {\em Doklady Mathematics}, volume~76, pages 603--605. Springer,
  2007.

\bibitem{wood2010big}
Attila P\'or and David~R Wood.
\newblock The big-line-big-clique conjecture is false for infinite point sets.
\newblock {\em arXiv preprint arXiv:1008.2988}, 2010.

\bibitem{porwood2010}
Attila P{\'o}r and David~R Wood.
\newblock On visibility and blockers.
\newblock {\em Journal of Computational Geometry}, 1(1):29--40, 2010.

\end{thebibliography}

\pagebreak

\section*{Appendix}

\textit{Proof of Lemma \ref{convex4block}}\\
Let $Q = \{q_1,q_2,q_3,q_4\}$ denote the unicolored vertex set of the convex
quadrilateral.  Consider any blocking set $B$ of three colors, suppose that the
color of $Q$ is black and the colors of $B$ are red, green and blue. Let $B'$
denote $\conv(Q) \cap B$. 

On one hand, $B'$ is a properly colored set with 3 colors and no more than 3
points on a line, so by claim \ref{mc3-3}, $|B'| \leq 6$.  On the other hand,
$B'$ must have a point on any side of the quadrilateral (we denote them by
$y_{12},\ y_{23},\ y_{34},\ y_{41}$), and at least one point in $\iconv(Q)$ to
block the visibilities along the diagonals of the quadrilateral. So there are
two cases:

\noindent \textbf{Case 1: } $|B'|=5$. In this case, the only point in
$\iconv(Q)$ must block both diagonals, so it must be at the intersection of the
diagonals, denote it by $z$. $z$ is visible by all other points of $Q \cup B'$,
so it must have a unique color (green). Then all the $y_i$-s will be red or
blue. Visibility between $y_i$-s of neighbouring sides of the quadrilateral
cannot be blocked by any point, hence neighbouring $y_i$-s must have different
colors. So opposite pairs of $y_i$-s must have the same color, so they have to
be blocked by $z$. We got the figure above.

\noindent \textbf{Case 2: } $|B'|=6$. One of the two points in the interior is
visible by all the other points of $Q \cup B'$, it must have a unique color,
say green. Then the remaining five points must share two colors: red and blue.
Hence there is a unicolored triangle among them. If $B' \cup X$ is properly
colored, this triangle is blocked by only two colors, that contradicts lemma
\ref{triangle-blocking}.

What is left is to prove that there is no point of $B$ outside $\conv(X)$.
Suppose the contrary.  We may assume (w.l.o.g.) that $B \setminus B'$ has some
points in the convex territory bordered by rays $zq_1$ and $zq_2$. Let $p$ be a
point among those with a minimal distance to the line $q_1q_2$.  Now consider
the follwing pairs: $(p,x_1),\ (p,y_12),\ (p,z),\ (p,y_{23}),\ (p,y_{41})$.
Using the minimal distance property of $p$ we have that the first three pairs
are all visible ($y_12$ cannot block $(p,z)$ as that would mean $p,\ y_12,\ z$
and $y_34$ are all on the same line). One of the last two pairs may be blocked
by $y_{12}$ but then the other one will be a visible pair.  $x_1$ is black,
$y_{12}$ is red, $z$ is green and $y_{23},\ y_{41}$ are blue, so $p$ cannot
have any color, contradiction.  \qed

\bigskip
\noindent\textit{Proof of Lemma \ref{concave4block}}\\
Assume that the concave set $X=(x_1,x_2,x_3,x_4)$ is red. (Here
$x_4 \in iconv(x_1,x_2,x_3)$.)

Suppose there are 4 unicolored (blue) points in the blocking set in $\conv(X)
\setminus X$. It follows that there are 4 blue points $B=\{b_1,b_2,b_3,b_4\}$
that form a blue-empty set. Suppose the points of $B$ are in convex position.
By Lemma \ref{convex4block}, $B$ can only be blocked one way, but that
configuration does not contain a concave 4-point set, and it is also maximal.
Thus the points of $B$ are in concave position. But then there would be a
triangle $b_1,b_2,b_3 \in B$ such that $(\conv(b_1,b_2,b_3) \setminus
\{b_1,b_2,b_3\})\cap X = \emptyset$. (In particular only $x_4$ can be in
$\conv(B)$, but it can not be both in
$\iconv(x_1,x_2,x_4),\iconv(x_2,x_3,x_4),\iconv(x_3,x_1,x_4)$.)

Thus every color set in $\conv(X) \setminus X$ has at most 3 points. Since the
segments $(x_i,x_j)$ need to be blocked, there are at most 3 points in
$\iconv(T_1)\cup \iconv(T_2) \cup \iconv(T_3)$ (Here $T_i$ ($i=1,2,3$) denote
the red-empty triangles of $X$.) We distinguish 7 cases based on the
distribution of points in $T_1,T_2,T_3$. The shorthand notation $(a_1,a_2,a_3)$
means that $|T_i\cap S|=a_i$. From the discussion above it follows that
$a_1+a_2+a_3 \leq 3$.  We may assume without loss of generality that $a_1 \leq
a_2 \leq a_3$.

We denote by $s_{ij}$ the point lying on the segment $x_ix_j$, and let
$S_k=\{s_{ij} | 1 \leq i \leq j \leq 4, i,j,k \text{ are distinct}\}$. Note
that beams exists if and only if $a_i=1$ or $a_i=2$ by Lemma
\ref{triangle-blocking}.

\noindent \textbf{Case 1: } $(0,0,0)$\\
The three points in $s_{i4} \;(i=1,2,3)$ can all see each other, thus their
colors are distinct. Since each $T_i$ contains at least on point of each color,
there must be 2 points in each color class.  it follows that the blockers of
the opposing edges of the tetrahedron have the same color. Since $\ell=3$,
$s_{i4}$ and $s_{jk}$ can not be blocked by $x_4$, so they are blocked by
either $s_{j4}$ or $s_{k4}$.  (Here $\{i,j,k\}=\{1,2,3\}$.) It is easy to
observe that we arrive at the configuration defined in the statement of the
lemma (or its reflection).

\begin{center}
\begin{tikzpicture}[line cap=round,line join=round,x=0.8cm,y=0.8cm]
\clip(-2.5,-1.13) rectangle (2.46,2.4);
\draw (2,-1)-- (0,2);
\draw (-2,-1)-- (2,-1);
\draw (0,2)-- (-2,-1);
\draw (0,0)-- (0,2);
\draw (0,0)-- (2,-1);
\draw (0,0)-- (-2,-1);
\begin{scriptsize}
\fill [sred] plot coordinates{(2,-1)} node[above right] {$x_1$};
\fill [sred] plot coordinates{(0,2)} node[above] {$x_2$};
\fill [sred] plot coordinates{(-2,-1)} node[above left] {$x_3$};
\fill [sred] plot coordinates{(0,0)} node[below] {$x_4$};
\fill [sblue] plot coordinates{(0,1)} node[right] {$s_{24}$};
\draw [sblack] plot coordinates{(1,-0.5)} node[above] {$s_{14}$};
\fill [sgreen] plot coordinates{(1,0.5)} node[above right] {$s_{12}$};

\fill [sblue] plot coordinates{(0,-1)} node[above left] {$s_{13}$};
\fill [sgreen] plot coordinates{(-1,-0.5)} node[above] {$s_{34}$};
\draw [sblack] plot coordinates{(-1,0.5)} node[above left] {$s_{23}$};

\node [black] at (0.55,0.5) {$T_3$};
\node [black] at (-0.4,0.5) {$T_1$};
\node [black] at (0.25,-0.6) {$T_2$};
\end{scriptsize}
\end{tikzpicture}
\vspace{-0.4cm}
\captionof{figure}{Case (1)}
\end{center}

\noindent \textbf{Case 2: } $(0,0,1)$\\
Let $p$ be the blocking point of the beam of $T_3$. The blocking point can not
lay on the line $x_3x_4$ since there would be 4 collinear points on the line.
Thus it lies on one side of line $x_3x_4$. We distinguish two subcases based on
the position of the beam in $T_3$.

(2a) If the beam is on $s_{14}$ and $s_{24}$, then we may assume that $p$ lies
on the left side of the ray $\overrightarrow{x_3x_4}$. Note that $s_{24}$ can
not block any points from $p$ (again, because $\ell=3$). Thus $p$ can see both
$s_{24},s_{23}$ and $s_{34}$, three points with different colors, which is a
contradiction.

(2b) If the beam is on $s_{14}$ and $s_{12}$, and $p$ lies on the right side of
$\overrightarrow{x_3x_4}$, then a similar argument shows that $p$ can see all
points in $S_2$.  Otherwise since $s_{24}$ and $s_{34}$ can not be blue (since
$T_3$ and $T_2$ already has its blue points), $s_{23}$ is blue. It follows that
$s_{34}$ has the same color as $p$.  Now the segments $s_{23}s_{12}$ and
$ps_{34}$ can only be blocked by $s_{24}$, but since the segments are disjoint,
it can not block both of them.

Note that the third possible beam position can be handled like this because it
can be obtained by a reflection from this case.

\begin{center}
\noindent\begin{minipage}{0.4\textwidth}
\begin{center}
\begin{tikzpicture}[line cap=round,line join=round,x=0.8cm,y=0.8cm]
\clip(-2.5,-1.13) rectangle (2.46,2.4);
\draw (2,-1)-- (0,2);
\draw (-2,-1)-- (2,-1);
\draw (0,2)-- (-2,-1);
\draw (0,0)-- (0,2);
\draw (0,0)-- (2,-1);
\draw (0,0)-- (-2,-1);
\draw (1,-0.5)-- (0,1);
\begin{scriptsize}
\fill [sred] plot coordinates{(2,-1)} node[above right] {$x_1$};
\fill [sred] plot coordinates{(0,2)} node[above] {$x_2$};
\fill [sred] plot coordinates{(-2,-1)} node[above left] {$x_3$};
\fill [sred] plot coordinates{(0,0)} node[below] {$x_4$};
\fill [sblue] plot coordinates{(0,1)} node[left] {$s_{24}$};
\fill [sblue] plot coordinates{(1,-0.5)} node[below left] {$s_{14}$};
\draw [sblack] plot coordinates{(0.4,0.4)} node[above right] {$p$};
\end{scriptsize}
\end{tikzpicture}
\end{center}
\vspace{-0.5cm} \captionof{figure}{Case (2a)}
\end{minipage}
\hspace{0.5cm}
\noindent\begin{minipage}{0.4\textwidth}
\begin{center}
\begin{tikzpicture}[line cap=round,line join=round,x=0.8cm,y=0.8cm]
\clip(-2.5,-1.13) rectangle (2.46,2.4);
\draw (2,-1)-- (0,2);
\draw (-2,-1)-- (2,-1);
\draw (0,2)-- (-2,-1);
\draw (0,0)-- (0,2);
\draw (0,0)-- (2,-1);
\draw (0,0)-- (-2,-1);
\draw (1,-0.5)-- (0.5,1.25);
\begin{scriptsize}
\fill [sred] plot coordinates{(2,-1)} node[above right] {$x_1$};
\fill [sred] plot coordinates{(0,2)} node[above] {$x_2$};
\fill [sred] plot coordinates{(-2,-1)} node[above left] {$x_3$};
\fill [sred] plot coordinates{(0,0)} node[below] {$x_4$};
\fill [sblue] plot coordinates{(0.5,1.25)} node[above right] {$s_{12}$};
\fill [sblue] plot coordinates{(1,-0.5)} node[below left] {$s_{14}$};
\fill [sblue] plot coordinates{(-1,0.5)} node[above left] {$s_{23}$};
\draw [sblack] plot coordinates{(-1,-0.5)} node[below right] {$s_{34}$};
\draw [sblack] plot coordinates{(0.65,0.7)} node[left] {$p$};
\end{scriptsize}
\end{tikzpicture}
\end{center}
\vspace{-0.5cm} \captionof{figure}{Case (2b)}
\end{minipage}
\end{center}

\noindent \textbf{Case 3: } $(0,0,2)$\\
Again, we have 4 subcases based on the position of the beam in $T_3$, and the
way $T_3$ is blocked. Let $p$ be the midpoint of the beam, and let $q$ be the
other point in $T_3$. Note that $s_{14}$, $s_{24}$ and $q$ can not block
anything from $p$ because $\ell=3$. If $p$ is on the right side of the ray
$\overrightarrow{x_3x_4}$, then it can see all points in $S_2$, one of which
has the same color as $p$. If $p$ is on the left side, it will see all points
in $S_1$.

\begin{center}
\noindent\begin{minipage}{0.4\textwidth}
\begin{center}
\begin{tikzpicture}[line cap=round,line join=round,x=0.9cm,y=0.9cm]
\clip(-2.5,-1.13) rectangle (2.46,2.4);
\draw (2,-1)-- (0,2);
\draw (-2,-1)-- (2,-1);
\draw (0,2)-- (-2,-1);
\draw (0,0)-- (0,2);
\draw (0,0)-- (2,-1);
\draw (0,0)-- (-2,-1);
\draw (1,-0.5)-- (0,1);
\draw (0.4,0.4) -- (1.05,0.4);
\begin{scriptsize}
\fill [sred] plot coordinates{(2,-1)} node[above right] {$x_1$};
\fill [sred] plot coordinates{(0,2)} node[above] {$x_2$};
\fill [sred] plot coordinates{(-2,-1)} node[above left] {$x_3$};
\fill [sred] plot coordinates{(0,0)} node[below] {$x_4$};
\fill [sblue] plot coordinates{(0,1)} node[left] {$s_{24}$};
\fill [sblue] plot coordinates{(1,-0.5)} node[below left] {$s_{14}$};
\draw [sblack] plot coordinates{(0.4,0.4)} node[below] {$p$};
\draw [sblack] plot coordinates{(1.05,0.4)} node[above right] {$s_{12}$};
\draw [sgreen] plot coordinates{(0.725,0.4)} node[above] {$q$};
\end{scriptsize}
\end{tikzpicture}
\end{center}
\vspace{-0.5cm} \captionof{figure}{Case (3a)}
\end{minipage}
\hspace{0.5cm}
\noindent\begin{minipage}{0.4\textwidth}
\begin{center}
\begin{tikzpicture}[line cap=round,line join=round,x=0.9cm,y=0.9cm]
\clip(-2.5,-1.13) rectangle (2.46,2.4);
\draw (2,-1)-- (0,2);
\draw (-2,-1)-- (2,-1);
\draw (0,2)-- (-2,-1);
\draw (0,0)-- (0,2);
\draw (0,0)-- (2,-1);
\draw (0,0)-- (-2,-1);
\draw (1,-0.5) -- (1,0.5);
\draw (1,0) -- (0,0.5);
\begin{scriptsize}
\fill [sred] plot coordinates{(2,-1)} node[above right] {$x_1$};
\fill [sred] plot coordinates{(0,2)} node[above] {$x_2$};
\fill [sred] plot coordinates{(-2,-1)} node[above left] {$x_3$};
\fill [sred] plot coordinates{(0,0)} node[below] {$x_4$};
\fill [sblue] plot coordinates{(1,0.5)} node[above right] {$s_{12}$};
\fill [sblue] plot coordinates{(1,-0.5)} node[below left] {$s_{14}$};
\draw [sblack] plot coordinates{(0,0.5)} node[left] {$s_{24}$};
\draw [sblack] plot coordinates{(1,0)} node[below left] {$p$};
\draw [sgreen] plot coordinates{(0.5,0.25)} node[above] {$q$};
\end{scriptsize}
\end{tikzpicture}
\end{center}
\vspace{-0.5cm} \captionof{figure}{Case (3b)}
\end{minipage}

\noindent\begin{minipage}{0.4\textwidth}
\begin{center}
\begin{tikzpicture}[line cap=round,line join=round,x=0.9cm,y=0.9cm]
\clip(-2.5,-1.13) rectangle (2.46,2.4);
\draw (2,-1)-- (0,2);
\draw (-2,-1)-- (2,-1);
\draw (0,2)-- (-2,-1);
\draw (0,0)-- (0,2);
\draw (0,0)-- (2,-1);
\draw (0,0)-- (-2,-1);
\draw (1.5,-0.75)-- (0,1.5);
\draw (0.4,0.4) -- (1.05,0.4);
\begin{scriptsize}
\fill [sred] plot coordinates{(2,-1)} node[above right] {$x_1$};
\fill [sred] plot coordinates{(0,2)} node[above] {$x_2$};
\fill [sred] plot coordinates{(-2,-1)} node[above left] {$x_3$};
\fill [sred] plot coordinates{(0,0)} node[below] {$x_4$};
\fill [sblue] plot coordinates{(0,1.5)} node[below left] {$s_{24}$};
\fill [sblue] plot coordinates{(1.5,-0.75)} node[left] {$s_{14}$};
\draw [sblack] plot coordinates{(0.4,0.4)} node[below] {$q$};
\draw [sblack] plot coordinates{(1.05,0.4)} node[above right] {$s_{12}$};
\draw [sgreen] plot coordinates{(0.725,0.4)} node[above] {$p$};
\end{scriptsize}
\end{tikzpicture}
\end{center}
\vspace{-0.5cm} \captionof{figure}{Case (3c)}
\end{minipage}
\hspace{0.5cm}
\noindent\begin{minipage}{0.4\textwidth}
\begin{center}
\begin{tikzpicture}[line cap=round,line join=round,x=0.9cm,y=0.9cm]
\clip(-2.5,-1.13) rectangle (2.46,2.4);
\draw (2,-1)-- (0,2);
\draw (-2,-1)-- (2,-1);
\draw (0,2)-- (-2,-1);
\draw (0,0)-- (0,2);
\draw (0,0)-- (2,-1);
\draw (0,0)-- (-2,-1);
\draw (0.5,-0.25) -- (0.5,1.25);
\draw (1,0) -- (0,0.5);
\begin{scriptsize}
\fill [sred] plot coordinates{(2,-1)} node[above right] {$x_1$};
\fill [sred] plot coordinates{(0,2)} node[above] {$x_2$};
\fill [sred] plot coordinates{(-2,-1)} node[above left] {$x_3$};
\fill [sred] plot coordinates{(0,0)} node[below] {$x_4$};
\fill [sblue] plot coordinates{(0.5,1.25)} node[above right] {$s_{12}$};
\fill [sblue] plot coordinates{(0.5,-0.25)} node[below left] {$s_{14}$};
\draw [sblack] plot coordinates{(0,0.5)} node[left] {$s_{24}$};
\draw [sblack] plot coordinates{(1,0)} node[below left] {$q$};
\draw [sgreen] plot coordinates{(0.5,0.25)} node[above right] {$p$};
\end{scriptsize}
\end{tikzpicture}
\end{center}
\vspace{-0.5cm} \captionof{figure}{Case (3d)}
\end{minipage}
\end{center}

\noindent \textbf{Case 4: } $(0,0,3)$\\
If $p$ is on the right side of $\overrightarrow{x_3x_4}$, then it can see all
points in $S_2$.  Otherwise $p$ and $q$ are both on the left side, but then $q$
can see all points in $S_1$.
\begin{center}
\begin{tikzpicture}[line cap=round,line join=round,x=0.9cm,y=0.9cm]
\clip(-2.5,-1.13) rectangle (2.46,2.4);
\draw (2,-1)-- (0,2);
\draw (-2,-1)-- (2,-1);
\draw (0,2)-- (-2,-1);
\draw (0,0)-- (0,2);
\draw (0,0)-- (2,-1);
\draw (0,0)-- (-2,-1);
\draw (1,-0.2) -- (1,0.5);
\draw (1,0.25) -- (0,0.5);
\draw (0.5,0.375) -- (1.5,-0.75);
\begin{scriptsize}
\fill [sred] plot coordinates{(2,-1)} node[above right] {$x_1$};
\fill [sred] plot coordinates{(0,2)} node[above] {$x_2$};
\fill [sred] plot coordinates{(-2,-1)} node[above left] {$x_3$};
\fill [sred] plot coordinates{(0,0)} node[below] {$x_4$};
\fill [sblue] plot coordinates{(1,0.5)} node[above right] {$s_{12}$};
\fill [sgreen] plot coordinates{(1.5,-0.75)} node[left] {$s_{14}$};
\draw [sblack] plot coordinates{(0,0.5)} node[left] {$s_{24}$};
\draw [sblue] plot coordinates{(1,-0.2)} node[left] {$p$};
\draw [sgreen] plot coordinates{(0.5,0.375)} node[above] {$q$};
\draw [sblack] plot coordinates{(1,0.25)} node[below right] {$r$};
\end{scriptsize}
\end{tikzpicture}
\vspace{-0.2cm} \captionof{figure}{Case (4)}
\end{center}
\noindent \textbf{Case 5: } $(0,1,1)$\\
We have 6 subcases based on beam positions.

(5a) Beams are $s_{13}s_{14}$ and $s_{14}s_{12}$. It follows that $s_{13},
\;s_{14}$ and $s_{12}$ have the same color (blue). But triangle $T_1$ also
needs a blue point, so we have 4 blue points, contradiction.

(5b) Beams are $s_{24}s_{14}$ and $s_{14}s_{34}$. But then $s_{34}$ and
$s_{24}$ have the same color and they can see each other since $int(T_1)$ has
no points.

(5c) Beams are $s_{13}s_{34}$ and $s_{12}s_{24}$. The endpoints of the two
beams must have distinct colors because $s_{34}$ and $s_{24}$ must be different
to block $T_1$.  Assume the endpoints are blue and black. It is easy to observe
that $s_{14}$ has to be green, thus the point $p$ in $int(T_3)$ has to be blue.
If $p$ lies on the left side of $\overrightarrow{x_3x_4}$, then it can see
$s_{34}$, otherwise it has to be blocked from $s_{34}$ and $s_{13}$, but the
only point we can use for blocking is $s_14$, so $p$ will see at least one of
$s_{34}$ and $s_{13}$.

(5d) Beams are $s_{14}s_{34}$ and $s_{12}s_{14}$. Let $q$ be the blocker of
beam $s_{14}s_{34}$. Either $s_{24}$ is black, in which case it can see $q$, or
there are $2$ black points in $T_1 \cup \conv(x_2x_4s_{14}s_{12})$, one of which
can see $q$, because $q$ can be blocked from them only by $x_4$.

(5e) Beams are $s_{14}s_{24}$ and $s_{13}s_{34}$. Let $p$ be the blocker of
beam $s_{14}s_{24}$. Now depending on which side of $\overrightarrow{x_3x_4}$
point $p$ is, $p$ can see all points in $T_1$ or all points in
$\conv(x_4s_{34}s_{13}x_1)$.

(5f) Beams are $s_{34}s_{13}$ and $s_{14}s_{12}$. If $p$ is on the right side
of $\overrightarrow{x_3x_4}$, then it can see all points in
$\conv(x_4s_{34}s_{13}x_1)$.  Otherwise $s_24$ can only block one segment out of
$s_{34}p$ and $s_{23}s_{12}$.

\noindent Note that the cases (5d), (5e), (5f) have reflections which can be
handled the same way.

\begin{center}
\noindent\begin{minipage}{0.3\textwidth}
\begin{center}
\begin{tikzpicture}[line cap=round,line join=round,x=0.9cm,y=0.9cm]
\clip(-2.5,-1.13) rectangle (2.46,2.4);
\draw (2,-1)-- (0,2);
\draw (-2,-1)-- (2,-1);
\draw (0,2)-- (-2,-1);
\draw (0,0)-- (0,2);
\draw (0,0)-- (2,-1);
\draw (0,0)-- (-2,-1);
\draw (0,-1) -- (1,-0.5) -- (1,0.5);
\begin{scriptsize}
\fill [sred] plot coordinates{(2,-1)} node[above right] {$x_1$};
\fill [sred] plot coordinates{(0,2)} node[above] {$x_2$};
\fill [sred] plot coordinates{(-2,-1)} node[above left] {$x_3$};
\fill [sred] plot coordinates{(0,0)} node[below] {$x_4$};
\draw [sblue] plot coordinates{(1,-0.5)} node[above right] {$s_{14}$};
\fill [sblue] plot coordinates{(1,0.5)} node[above right] {$s_{12}$};
\fill [sblue] plot coordinates{(0,-1)} node[above left] {$s_{13}$};
\end{scriptsize}
\end{tikzpicture}
\end{center}
\vspace{-0.5cm} \captionof{figure}{Case (5a)}
\end{minipage}
\noindent\begin{minipage}{0.3\textwidth}
\begin{center}
\begin{tikzpicture}[line cap=round,line join=round,x=0.9cm,y=0.9cm]
\clip(-2.5,-1.13) rectangle (2.46,2.4);
\draw (2,-1)-- (0,2);
\draw (-2,-1)-- (2,-1);
\draw (0,2)-- (-2,-1);
\draw (0,0)-- (0,2);
\draw (0,0)-- (2,-1);
\draw (0,0)-- (-2,-1);
\draw (-1,-0.5) -- (1,-0.5) -- (0,1);
\begin{scriptsize}
\fill [sred] plot coordinates{(2,-1)} node[above right] {$x_1$};
\fill [sred] plot coordinates{(0,2)} node[above] {$x_2$};
\fill [sred] plot coordinates{(-2,-1)} node[above left] {$x_3$};
\fill [sred] plot coordinates{(0,0)} node[below] {$x_4$};
\fill [sblue] plot coordinates{(-1,-0.5)} node[above] {$s_{34}$};
\fill [sblue] plot coordinates{(0,1)} node[right] {$s_{24}$};
\draw [sblue] plot coordinates{(1,-0.5)} node[below] {$s_{14}$};
\end{scriptsize}
\end{tikzpicture}
\end{center}
\vspace{-0.5cm} \captionof{figure}{Case (5b)}
\end{minipage}
\noindent\begin{minipage}{0.3\textwidth}
\begin{center}
\begin{tikzpicture}[line cap=round,line join=round,x=0.9cm,y=0.9cm]
\clip(-2.5,-1.13) rectangle (2.46,2.4);
\draw (2,-1)-- (0,2);
\draw (-2,-1)-- (2,-1);
\draw (0,2)-- (-2,-1);
\draw (0,0)-- (0,2);
\draw (0,0)-- (2,-1);
\draw (0,0)-- (-2,-1);
\draw (-1,-0.5) -- (0,-1);
\draw (0,1) -- (1,0.5);
\begin{scriptsize}
\fill [sred] plot coordinates{(2,-1)} node[above right] {$x_1$};
\fill [sred] plot coordinates{(0,2)} node[above] {$x_2$};
\fill [sred] plot coordinates{(-2,-1)} node[above left] {$x_3$};
\fill [sred] plot coordinates{(0,0)} node[below] {$x_4$};
\fill [sblue] plot coordinates{(-1,-0.5)} node[above] {$s_{34}$};
\fill [sblack] plot coordinates{(0,1)} node[right] {$s_{24}$};
\fill [sblack] plot coordinates{(1,0.5)} node[above right] {$s_{12}$};
\fill [sblue] plot coordinates{(0,-1)} node[above right] {$s_{13}$};
\fill [sgreen] plot coordinates{(1,-0.5)} node[below] {$s_{14}$};
\fill [sblue] plot coordinates{(0.5,0.75)} node[below left] {$p$};
\end{scriptsize}
\end{tikzpicture}
\end{center}
\vspace{-0.5cm} \captionof{figure}{Case (5c)}
\end{minipage}

\noindent\begin{minipage}{0.3\textwidth}
\begin{center}
\begin{tikzpicture}[line cap=round,line join=round,x=0.9cm,y=0.9cm]
\clip(-2.5,-1.13) rectangle (2.46,2.4);
\draw (2,-1)-- (0,2);
\draw (-2,-1)-- (2,-1);
\draw (0,2)-- (-2,-1);
\draw (0,0)-- (0,2);
\draw (0,0)-- (2,-1);
\draw (0,0)-- (-2,-1);
\draw (-1,-0.5) -- (1,-0.5) -- (1,0.5);
\begin{scriptsize}
\fill [sred] plot coordinates{(2,-1)} node[above right] {$x_1$};
\fill [sred] plot coordinates{(0,2)} node[above] {$x_2$};
\fill [sred] plot coordinates{(-2,-1)} node[above left] {$x_3$};
\fill [sred] plot coordinates{(0,0)} node[below] {$x_4$};
\draw [sblue] plot coordinates{(1,-0.5)} node[below] {$s_{14}$};
\fill [sblue] plot coordinates{(1,0.5)} node[above right] {$s_{12}$};
\fill [sblue] plot coordinates{(-1,-0.5)} node[above] {$s_{34}$};
\fill [sblack] plot coordinates{(0,-0.5)} node[below] {$q$};
\draw [smarko] plot coordinates{(0,1)} node[right] {$s_{24}$};
\end{scriptsize}
\end{tikzpicture}
\end{center}
\vspace{-0.5cm} \captionof{figure}{Case (5d)}
\end{minipage}
\noindent\begin{minipage}{0.3\textwidth}
\begin{center}
\begin{tikzpicture}[line cap=round,line join=round,x=0.9cm,y=0.9cm]
\clip(-2.5,-1.13) rectangle (2.46,2.4);
\draw (2,-1)-- (0,2);
\draw (-2,-1)-- (2,-1);
\draw (0,2)-- (-2,-1);
\draw (0,0)-- (0,2);
\draw (0,0)-- (2,-1);
\draw (0,0)-- (-2,-1);
\draw (-1,-0.5) -- (0,-1);
\draw (1,-0.5) -- (0,1);
\begin{scriptsize}
\fill [sred] plot coordinates{(2,-1)} node[above right] {$x_1$};
\fill [sred] plot coordinates{(0,2)} node[above] {$x_2$};
\fill [sred] plot coordinates{(-2,-1)} node[above left] {$x_3$};
\fill [sred] plot coordinates{(0,0)} node[below] {$x_4$};
\fill [sblue] plot coordinates{(0,1)} node[right] {$s_{24}$};
\draw [sblue] plot coordinates{(1,-0.5)} node[below] {$s_{14}$};
\fill [sblack] plot coordinates{(-1,-0.5)} node[above] {$s_{34}$};
\fill [sblack] plot coordinates{(0,-1)} node[above right] {$s_{13}$};
\draw [smarko] plot coordinates{(0.4,0.4)} node[above right] {$p$};
\end{scriptsize}
\end{tikzpicture}
\end{center}
\vspace{-0.5cm} \captionof{figure}{Case (5e)}
\end{minipage}
\noindent\begin{minipage}{0.3\textwidth}
\begin{center}
\begin{tikzpicture}[line cap=round,line join=round,x=0.9cm,y=0.9cm]
\clip(-2.5,-1.13) rectangle (2.46,2.4);
\draw (2,-1)-- (0,2);
\draw (-2,-1)-- (2,-1);
\draw (0,2)-- (-2,-1);
\draw (0,0)-- (0,2);
\draw (0,0)-- (2,-1);
\draw (0,0)-- (-2,-1);
\draw (-1,-0.5) -- (0,-1);
\draw (1,-0.5) -- (1,0.5);
\begin{scriptsize}
\fill [sred] plot coordinates{(2,-1)} node[above right] {$x_1$};
\fill [sred] plot coordinates{(0,2)} node[above] {$x_2$};
\fill [sred] plot coordinates{(-2,-1)} node[above left] {$x_3$};
\fill [sred] plot coordinates{(0,0)} node[below] {$x_4$};
\fill [sblack] plot coordinates{(-1,-0.5)} node[above] {$s_{34}$};
\fill [sblue] plot coordinates{(1,0.5)} node[above right] {$s_{12}$};
\fill [sblack] plot coordinates{(0,-1)} node[above right] {$s_{13}$};
\fill [sblue] plot coordinates{(1,-0.5)} node[below] {$s_{14}$};
\fill [sblue] plot coordinates{(-1,0.5)} node[above left] {$s_{23}$};
\draw [sblack] plot coordinates{(1,0)} node[below left] {$p$};
\end{scriptsize}
\end{tikzpicture}
\end{center}
\vspace{-0.5cm} \captionof{figure}{Case (5f)}
\end{minipage}
\end{center}

\noindent \textbf{Case 6: } $(0,1,2)$\\
We have 9 subcases based on beam poistions. Let $p$ be the midpoint of the beam
in $T_3$.  Notice that in the previous case, all subcases except (5c) and (5d)
ended with the conclusion that $p$ can see all points in $T_1$ or all points in
$T_2$. All these arguments can be carried over here, since the new point $q$ in
this case cannot block any of these points from $p$. It is also easy to see
that the argument in (5d) and its reflection works here as well. So the only
remaining case is corresponding to the beam position (5c).

(6c) Beams are $s_{13}s_{34}$ and $s_{12}s_{24}$. The endpoints of the two
beams have different colors, suppose that $s_{13},\; s_{34}$ are blue and
$s_{12},\; s_{24}$ are black.  The colros of the rest of the points are
determined as shown in the figures. If $p$ lies on the left side of
$\overrightarrow{x_3x_4}$, then it can see all points in $S_1$ otherwise
$s_{13}$ and $s_{34}$ can see the blue point in $T_3$.

%
%
%
%

\begin{center}
\noindent\begin{minipage}{0.8\textwidth}
\begin{center}
\begin{tikzpicture}[line cap=round,line join=round,x=0.9cm,y=0.9cm]
\clip(-2.5,-1.13) rectangle (2.46,2.4);
\draw (2,-1)-- (0,2);
\draw (-2,-1)-- (2,-1);
\draw (0,2)-- (-2,-1);
\draw (0,0)-- (0,2);
\draw (0,0)-- (2,-1);
\draw (0,0)-- (-2,-1);
\draw (-1,-0.5) -- (0,-1);
\draw (0,1) -- (1,0.5);
\draw (1,-0.5) -- (0.5,0.75);
\begin{scriptsize}
\fill [sred] plot coordinates{(2,-1)} node[above right] {$x_1$};
\fill [sred] plot coordinates{(0,2)} node[above] {$x_2$};
\fill [sred] plot coordinates{(-2,-1)} node[above left] {$x_3$};
\fill [sred] plot coordinates{(0,0)} node[below] {$x_4$};
\fill [sblue] plot coordinates{(-1,-0.5)} node[above] {$s_{34}$};
\fill [sblack] plot coordinates{(0,1)} node[right] {$s_{24}$};
\fill [sblack] plot coordinates{(1,0.5)} node[above right] {$s_{12}$};
\fill [sblue] plot coordinates{(0,-1)} node[above right] {$s_{13}$};
\fill [sgreen] plot coordinates{(1,-0.5)} node[below](s14) {$s_{14}$};
\fill [sgreen] plot coordinates{(0.5,0.75)} node[below left] {$p$};
\fill [sblue] plot coordinates{(0.75,0.125)} node[below left] {$q$};
\end{scriptsize}
\end{tikzpicture} 
\begin{tikzpicture}[line cap=round,line join=round,x=0.9cm,y=0.9cm]
\clip(-2.5,-1.13) rectangle (2.46,2.4);
\draw (2,-1)-- (0,2);
\draw (-2,-1)-- (2,-1);
\draw (0,2)-- (-2,-1);
\draw (0,0)-- (0,2);
\draw (0,0)-- (2,-1);
\draw (0,0)-- (-2,-1);
\draw (-1,-0.5) -- (0,-1);
\draw (0,0.5) -- (1.5,-0.25);
\draw (1,-0.5) -- (0.5,0.75);
\begin{scriptsize}
\fill [sred] plot coordinates{(2,-1)} node[above right] {$x_1$};
\fill [sred] plot coordinates{(0,2)} node[above] {$x_2$};
\fill [sred] plot coordinates{(-2,-1)} node[above left] {$x_3$};
\fill [sred] plot coordinates{(0,0)} node[below] {$x_4$};
\fill [sblue] plot coordinates{(-1,-0.5)} node[above] {$s_{34}$};
\fill [sblack] plot coordinates{(0,0.5)} node[right] {$s_{24}$};
\fill [sblack] plot coordinates{(1.5,-0.25)} node[above right] {$s_{12}$};
\fill [sblue] plot coordinates{(0,-1)} node[above right] {$s_{13}$};
\fill [sgreen] plot coordinates{(1,-0.5)} node[below](s14) {$s_{14}$};
\fill [sgreen] plot coordinates{(0.5,0.75)} node[above left] {$q$};
\fill [sblue] plot coordinates{(0.75,0.125)} node[below left] {$p$};
\end{scriptsize}
\end{tikzpicture}
\end{center}
\vspace{-0.5cm} \captionof{figure}{Case (6c)}
\end{minipage}
\end{center}

\noindent \textbf{Case 7: } $(1,1,1)$

(7a) The beams form a path of length 3. In this case the beam endpoints have
the same color, and there is 4 of them. (With rotations and reflections, this
case covers 9 beam positionings.)

(7b) The beams form a circle of length 3. The blue triangle
$s_{14}s_{24}s_{34}$ has one point inside, so it must be blocked as option 2 in
Lemma \ref{triangle-blocking}. Without loss of generality we may suppose that
$q$ and $r$ have the same color. They need to be blocked from $s_{12}$, but the
only point that can block either of them is $p$, and it cannot block both
$s_{12}q$ and $s_{12}r$.

(7c) The beams form a path of length two, and one of the endpoints of the path
is not on the sides of $x_1x_2x_3$. We use the notations of the figure. Point
$p$ hast to be black or green. Depending on its position in relation to ray
$\overrightarrow{x_1x_4}$ it can see all green and black points in $T_3$ or
$T_2$. (This case corresponds to 6 beam positions after taking rotations and
reflections.)

(7d) The beams form a path of length two, and both endpoints of the path are on
the sides of $x_1x_2x_3$. In this case the path is disjoint from triangle
$T_1$, but $T_1$ must contain at least one more blue point, so we would have 4
blue points. (This case corresponds to $9$ beam positions.)

(7e) The endpoints of the beams are disjoint. We use the notations of the
figure.  If $q$ is on the right side of $\overrightarrow{x_3x_4}$, then it can
see $s_34$, otherwise $s_{12}$ is on the left side, and it can see $r$.

\begin{center}
\noindent\begin{minipage}{0.45\textwidth}
\begin{center}
\begin{tikzpicture}[line cap=round,line join=round,x=0.9cm,y=0.9cm]
\clip(-2.5,-1.13) rectangle (2.46,2.4);
\draw (2,-1)-- (0,2);
\draw (-2,-1)-- (2,-1);
\draw (0,2)-- (-2,-1);
\draw (0,0)-- (0,2);
\draw (0,0)-- (2,-1);
\draw (0,0)-- (-2,-1);
\draw (-1,-0.5) -- (1,-0.5) -- (0,1) -- (-1,-0.5);
\draw (-0.37,0.45) -- (0.4,-0.5);
\begin{scriptsize}
\fill [sred] plot coordinates{(2,-1)} node[above right] {$x_1$};
\fill [sred] plot coordinates{(0,2)} node[above] {$x_2$};
\fill [sred] plot coordinates{(-2,-1)} node[above left] {$x_3$};
\fill [sred] plot coordinates{(0,0)} node[right] {$x_4$};
\fill [sblue] plot coordinates{(-1,-0.5)} node[below] {$s_{34}$};
\fill [sblue] plot coordinates{(0,1)} node[right] {$s_{24}$};
\fill [sblue] plot coordinates{(1,-0.5)} node[below](s14) {$s_{14}$};
\fill [sblack] plot coordinates{(1.05,0.4)} node[above right] {$s_{12}$};
\fill [sgreen] plot coordinates{(0.5,0.25)} node[above right] {$p$};
\fill [sblack] plot coordinates{(-0.37,0.45)} node[left] {$q$};
\fill [sblack] plot coordinates{(0.4,-0.5)} node[below left] {$r$};
\end{scriptsize}
\end{tikzpicture}
\end{center}
\vspace{-0.5cm} \captionof{figure}{Case (7b)}
\end{minipage}
\noindent\begin{minipage}{0.45\textwidth}
\begin{center}
\begin{tikzpicture}[line cap=round,line join=round,x=0.9cm,y=0.9cm]
\clip(-2.5,-1.13) rectangle (2.46,2.4);
\draw (2,-1)-- (0,2);
\draw (-2,-1)-- (2,-1);
\draw (0,2)-- (-2,-1);
\draw (0,0)-- (0,2);
\draw (0,0)-- (2,-1);
\draw (0,0)-- (-2,-1);
\draw (0,1) -- (-1,-0.5);
\draw (1,-0.5) -- (1,0.5);
\draw (-1,-0.5) -- (0,-1);
\begin{scriptsize}
\fill [sred] plot coordinates{(2,-1)} node[above right] {$x_1$};
\fill [sred] plot coordinates{(0,2)} node[above] {$x_2$};
\fill [sred] plot coordinates{(-2,-1)} node[above left] {$x_3$};
\fill [sred] plot coordinates{(0,0)} node[below] {$x_4$};
\fill [sblue] plot coordinates{(-1,-0.5)} node[below] {$s_{34}$};
\fill [sblue] plot coordinates{(0,1)} node[right] {$s_{24}$};
\fill [sblack] plot coordinates{(1,-0.5)} node[below](s14) {$s_{14}$};
\fill [sblack] plot coordinates{(1,0.5)} node[above right] {$s_{12}$};
\fill [sblue] plot coordinates{(0,-1)} node[above right] {$s_{13}$};
\fill [smarko] plot coordinates{(-0.37,0.45)} node[left] {$p$};
\fill [sgreen] plot coordinates{(-0.5,-0.75)} node[above right] {};
\fill [sgreen] plot coordinates{(1,0)} node[above right] {};
\end{scriptsize}
\end{tikzpicture}
\end{center}
\vspace{-0.5cm} \captionof{figure}{Case (7c)}
\end{minipage}
\noindent\begin{minipage}{0.45\textwidth}
\begin{center}
\begin{tikzpicture}[line cap=round,line join=round,x=0.9cm,y=0.9cm]
\clip(-2.5,-1.13) rectangle (2.46,2.4);
\draw (2,-1)-- (0,2);
\draw (-2,-1)-- (2,-1);
\draw (0,2)-- (-2,-1);
\draw (0,0)-- (0,2);
\draw (0,0)-- (2,-1);
\draw (0,0)-- (-2,-1);
\draw (0,-1)--(1,-0.5) -- (1,0.5);
\begin{scriptsize}
\fill [sred] plot coordinates{(2,-1)} node[above right] {$x_1$};
\fill [sred] plot coordinates{(0,2)} node[above] {$x_2$};
\fill [sred] plot coordinates{(-2,-1)} node[above left] {$x_3$};
\fill [sred] plot coordinates{(0,0)} node[below] {$x_4$};
\fill [sblue] plot coordinates{(1,-0.5)} node[below](s14) {$s_{14}$};
\fill [sblue] plot coordinates{(1,0.5)} node[above right] {$s_{12}$};
\fill [sblue] plot coordinates{(0,-1)} node[above left] {$s_{13}$};
\end{scriptsize}
\end{tikzpicture}
\end{center}
\vspace{-0.5cm} \captionof{figure}{Case (7d)}
\end{minipage}
\noindent\begin{minipage}{0.45\textwidth}
\begin{center}
\begin{tikzpicture}[line cap=round,line join=round,x=0.9cm,y=0.9cm]
\clip(-2.5,-1.13) rectangle (2.46,2.4);
\draw (2,-1)-- (0,2);
\draw (-2,-1)-- (2,-1);
\draw (0,2)-- (-2,-1);
\draw (0,0)-- (0,2);
\draw (0,0)-- (2,-1);
\draw (0,0)-- (-2,-1);
\draw (1,-0.5) -- (1,0.5);
\draw (0,1) -- (-1,0.5);
\draw (-1,-0.5) -- (0,-1);
\begin{scriptsize}
\fill [sred] plot coordinates{(2,-1)} node[above right] {$x_1$};
\fill [sred] plot coordinates{(0,2)} node[above] {$x_2$};
\fill [sred] plot coordinates{(-2,-1)} node[above left] {$x_3$};
\fill [sred] plot coordinates{(0,0)} node[right] {$x_4$};
\fill [sblue] plot coordinates{(-1,-0.5)} node[below] {$s_{34}$};
\fill [sblue] plot coordinates{(0,-1)} node[above] {$s_{13}$};
\fill [sblack] plot coordinates{(1,-0.5)} node[below](s14) {$s_{14}$};
\fill [sblack] plot coordinates{(1,0.5)} node[above right] {$s_{12}$};
\fill [sgreen] plot coordinates{(0,1)} node[right] {$s_{24}$};
\fill [sgreen] plot coordinates{(-1,0.5)} node[above left] {$s_{23}$};
\fill [sgreen] plot coordinates{(-0.5,-0.75)} node[above right] {$p$};
\fill [sblue] plot coordinates{(1,0)} node[left] {$q$};
\fill [sblack] plot coordinates{(-0.5,0.75)} node[above] {$r$};

\end{scriptsize}
\end{tikzpicture}
\end{center}
\vspace{-0.5cm} \captionof{figure}{Case (7e)}
\end{minipage}
\end{center}
\qed

\bigskip
\noindent\textit{Proof of Theorem \ref{mc_3(4)}}\\
Let $X$ be a properly 4-colored configuration. Assume $|X|\geq 13$. The largest
color class $C$ contains at least 4 points, so there is a convex or concave
$C$-empty set of 4 points. It is necessarily blocked by $3$ colors, meaning
that the blocking configuration is equivalent to the one described in Lemma
\ref{convex4block} or Lemma \ref{concave4block}. Both of these configurations
are maximal, it follows that $|X|\leq 10$, a contradiction.

The two configurations below show properly 4-colored 12-point sets, and they
prove $mc_3(4)=12$.  \qed

\vspace{-0.5cm}
\begin{center}
\begin{tikzpicture}[line cap=round,line join=round,x=0.25cm,y=0.25cm]
\clip(-9,-4) rectangle (23,24);
\draw (4,8)-- (10,8)-- (7,2.8)-- (4,8);
\draw (7.93,9.61)-- (22.14,-1.4)-- (5.5,5.4)-- (7.93,9.61);
\draw  (8.5,5.4)-- (3.65,5.4)-- (6.07,23.21)-- (8.5,5.4);
\draw (-7.21,-3.01)-- (7,8)-- (9.43,3.8)-- (-7.21,-3.01);
\begin{scriptsize}
\fill [bigblack] plot coordinates{(4,8)} node[above left]{};
\fill [bigblack] plot coordinates{(10,8)} node[above right]{};
\fill [bigblack] plot coordinates{(7,2.8)} node[below]{};
\fill [biggreen] plot coordinates{(7,8)} node[above left]{};
\fill [bigblue] plot coordinates{(8.5,5.4)} node[right]{};
\fill [bigred] plot coordinates{(5.5,5.4)} node[below left]{};
\fill [biggreen] plot coordinates{(9.43,3.8)} node[below]{};
\fill [bigblue] plot coordinates{(3.65,5.4)}  node[above left]{};
\fill [bigred] plot coordinates{(7.93,9.61)} node[above right]{};
\fill [bigred] plot coordinates{(22.14,-1.4)} node[below left]{};
\fill [bigblue] plot coordinates{(6.07,23.21)} node[right]{};
\fill [biggreen] plot coordinates{(-7.21,-3.01)} node[above left]{};
\end{scriptsize}
\end{tikzpicture}
\hspace{-0.5cm}
\begin{tikzpicture}[line cap=round,line join=round,x=1.2cm,y=1.2cm]
\clip(-2.2,-2.2) rectangle (3.2,3.2);
\tikzstyle{every node}=[inner sep=0pt]
\begin{scriptsize}
\node [nred] (A) at (0,0) {};
\node [nblack] (B) at (1,0) {};
\node [nblue] (C) at (1,1) {};
\node [ngreen](D) at (0,1) {};
\node [nred] (A') at (2,0) {};
\node [nblack] (B') at (1,2) {};
\node [nblue] (C') at (-1,1) {};
\node [ngreen] (D') at (0,-1) {};
\node [nred] (A'') at (-2,2) {};
\node [nblack] (B'') at (-1,-2) {};
\node [nblue] (C'') at (3,-1) {};
\node [ngreen] (D'') at (2,3) {};
\end{scriptsize}
\draw (A)--(B)--(C)--(D)--(A);
\draw (A)--(D')--(B'');
\draw (B)--(A')--(C'');
\draw (C)--(B')--(D'');
\draw (D)--(C')--(A'');
\draw (B'')--(A)--(B');
\draw (C'')--(B)--(C');
\draw (D'')--(C)--(D');
\draw (A'')--(D)--(A');
\draw (A)--(C');
\draw (B)--(D');
\draw (C)--(A');
\draw (D)--(B');
\end{tikzpicture}
\end{center}

\end{document}